\algnewcommand{\LineComment}[1]{\State \(\triangleright\) #1}
\newcommand{\BigO}[1]{\ensuremath{\operatorname{O}\bigl(#1\bigr)}}
\newtheorem{theorem}{Theorem}[section]
\newtheorem{lemma}[theorem]{Lemma}
\newtheorem{proposition}[theorem]{Proposition}
\theoremstyle{definition}
\newtheorem{definition}[theorem]{Definition}
\newtheorem{remark}[theorem]{Remark}
\def\F{\mathbb{F}}
\def\P{\mathbb{P}}
\newcommand{\Ima}{\textrm{Im}}
\newcommand{\udots}{\mathinner{\mskip1mu\raise1pt\vbox{\kern7pt\hbox{.}}
  \mskip2mu\raise4pt\hbox{.}\mskip2mu\raise7pt\hbox{.}\mskip1mu}}
\begin{document}  

\title[On The Effective Construction of Asymmetric Chudnovsky Multiplication Algorithms in Finite Fields Without Derivated Evaluation]{ On The Effective Construction of Asymmetric Chudnovsky Multiplication Algorithms in  Finite Fields Without Derivated Evaluation}

\author{St\'ephane Ballet} 
\address{Aix Marseille Univ, CNRS, Centrale Marseille, I2M, Marseille,  France} 
\email{stephane.ballet@univ-amu.fr}
\author{Nicolas Baudru}
\address{Aix Marseille Univ, CNRS, Centrale Marseille, LIF, Marseille, France}
\email{nicolas.baudru@univ-amu.fr}
\author{Alexis Bonnecaze}
\address{Aix Marseille Univ, CNRS, Centrale Marseille, I2M, Marseille,  France} 
\email{alexis.bonnecaze@univ-amu.fr}
\author{Mila Tukumuli}
\address{  } 
\email{tukumulimila@gmail.com}
\maketitle

\begin{abstract}
\noindent

The Chudnovsky and Chudnovsky algorithm for the multiplication in extensions of finite fields 
provides a bilinear complexity which is uniformly linear whith respect to the degree of the extension.
Recently, Randriambololona has generalized the method, allowing asymmetry in the interpolation procedure
and leading to new upper bounds on the bilinear complexity.
We describe the effective algorithm of this asymmetric method,  without derivated evaluation.
Finally, we give examples with the finite field $\F_{16^{13}}$ using only rational places, $\F_{4^{13}}$ 
using also places of degree two and $\F_{2^{13}}$ using also places of degree four.

\end{abstract}

Keywords: Multiplication algorithm, bilinear complexity,  interpolation on algebraic curve, finite field.

\section{Introduction}\label{intro}

Let $q$ be a prime power, ${\mathbb F}_q$ the finite field with $q$ elements and ${\mathbb F}_{q^n}$ the degree $n$ extension of ${\mathbb F}_q$.
Among all algorithms of multiplications in ${\mathbb F}_{q^n}$, those based on Chudnovsky-Chudnovsky \cite{Chudnovsky} 
method are known to provide the lowest bilinear complexity. This method is based on interpolation on algebraic curves defined over a finite field  and provides a 
bilinear complexity which is linear in $n$. The original algorithm uses only points of degree
1, with multiplicity 1. Ballet and Rolland~\cite{Ballet4, Ballet5} and Arnaud~\cite{Arnaud} improved the algorithm introducing interpolation 
at points of higher degree or higher multiplicity. The symmetry of the construction involves 2-torsion points that represent an obstacle 
to the improvement of upper bilinear complexity bounds. To eliminate this difficulty,  Randriambololona~\cite{Randriam} allowed asymmetry in the interpolation procedure, 
and then Pieltant and Randriambololona~\cite{pira} derived new bounds, uniform in $q$, of the bilinear complexity. Unlike symmetric constructions, no effective implementation 
of this asymmetric  construction has been done yet. In this article, we construct explicitly such multiplication algorithms for finite extensions of finite fields.

\subsection{Multiplication algorithm and tensor rank}

Let $q$ be a prime power, ${\mathbb F}_q$ the finite field with $q$ elements and ${\mathbb F}_{q^n}$ the degree $n$ extension of ${\mathbb F}_q$.
The multiplication of two elements of $\F_{q^n}$ is 
an $\F_q$-bilinear application from $\F_{q^n} \times \F_{q^n}$ onto $\F_{q^n}$.
Then it can be considered as an $\F_q$-linear application from the tensor product 
${\F_{q^n} \otimes_{\F_q} \F_{q^n}}$
onto $\F_{q^n}$. Consequently, it can also be  considered as an element 
$T_m$ of ${{\F_{q^n}}^\star \otimes_{\F_q} {\F_{q^n}}^\star \otimes_{\F_q} \F_{q^n}}$ where $\star$ denotes the dual.
When $T_m$ is written
\begin{equation}\label{tensor}
T_m=\sum_{i=1}^{r} x_i^\star\otimes y_i^\star\otimes c_i,
\end{equation}
where the $r$ elements $x_i^\star$ as well as the $r$ elements $y_i^\star$
are in the dual ${\F_{q^n}}^\star$ of $\F_{q^n}$ while the $r$ elements $c_i$ are in $\F_{q^n}$,
the following holds for any ${x,y \in \F_{q^n}}$:
$$
x\cdot y=\sum_{i=1}^r x_i^\star(x) y_i^\star(y) c_i.
$$
The decomposition (\ref{tensor}) is not unique. 

\begin{definition}\label{defalgobil}
Every expression
$$
x\cdot y=\sum_{i=1}^r x_i^\star(x) y_i^\star(y) c_i
$$
defines a bilinear multiplication algorithm ${\mathcal U}$ of bilinear complexity $\mu({\mathcal U})=r$.
Such an algorithm is said symmetric if $x_i=y_i$ for all $i$.

\end{definition}

\begin{definition}\label{defcomplexbil}
The minimal number of summands in a decomposition of the tensor $T_m$ of the multiplication
is called the bilinear complexity (resp. symmetric bilinear complexity) of the multiplication and is denoted by
$\mu_{q}(n)$ (resp. $\mu^{sym}_{q}(n)$):
$$
\mu_{q}(n)= \min_{{\mathcal U}} \mu({\mathcal U})
$$ where ${\mathcal U}$ is running over all bilinear multiplication algorithms (resp. all bilinear symmmetric multiplication algorithms) in $\F_{q^n}$ over $\F_q$.
\end{definition}

\subsection{Known results}
In 1979, Winograd proved~\cite{Winograd} that optimal multiplication algorithms realizing the lowest bilinear bound belong to the class of interpolation algorithms. Then, in 1988, 
 Chudnovsky and Chudnovsky introduced a method~\cite{Chudnovsky}  to prove the linearity~\cite{Ballet2} of the bilinear complexity of the multiplication in finite extensions of a finite field. 
 In doing so, they proposed the first known  multiplication algorithm using interpolation to algebraic function fields (of one variable) over $ \F_{q} $. 
 This  original algorithm only uses points of degree 1, with multiplicity 1. Later, several studies focused on the qualitative improvement of this algorithm 
 (for example~\cite{Ballet4, Arnaud, CenkOzbudak}) 
  allowing interpolation at points of higher degree, or with higher multiplicity. In parallel,  improvements of upper bounds (for example~\cite{Ballet5, BalletPieltant, bapirasi}) 
  and asymptotic upper bounds (for example~\cite{shtsvl, Ballet8}) of the bilinear complexity were obtained. 
 
The first known effective finite field multiplication through interpolation on algebraic curves was proposed by Shokrollahi and Baum ~\cite{ShokBaum}. They used the Fermat curve $ x^3 + y^3 = 1$ 
to construct a multiplication algorithm over $\F_{4^4}$ with 8 bilinear multiplications. In~\cite{Ballet3}, Ballet proposed one over $ \F_{16^n} $ where $ n~\in~[13,14,15]$, 
using the hyperelliptic curve  $y^2 + y = x^5$ of genus 2, with $2n + 1$  bilinear multiplications. Notice that these aforementioned two algorithms only used rational points, with multiplicity 1. 
In 2009, Cenk and \"{O}zbudak proposed in~\cite{CenkOzbudak} an explicit elliptic multiplication algorithm in $ \F_{3^9} $ with 26 bilinear multiplications. 
To this end, they used the elliptic curve $ y^2 = x^3 +  x  + 2 $, combining the ideas of using  points of higher degree and higher multiplicity. 
In fact, few studies have been devoted to the effective construction of Chudnovsky type algorithms, and in particular  when the degree of extensions reaches cryptographic size.
In 2013, Ballet et al. \cite{BBT15} detailed a multiplication algorithm in $\F_{3^{57}}$ with 234 bilinear multiplications using the elliptic curve $y^2+2x^3+2x^2+1=0$ with points of degree at most 4 
and multiplicity at most 3. In 2015, Atighehchi et al. \cite{ABBR15Cras, ABBR15} proposed a model allowing parallel computation, in which an ingenious use of normal bases provides 
efficient algorithms for both multiplication and exponentiation. They detailed an implementation in the finite field $\F_{16^{13}}$ using the hyperelliptic curve $y^2+y=x^5$ of genus 2. In 2012, Riandriambololona \cite{Randriam} 
introduced an asymmetric algorithm which generalizes the Chudnovsky algorithm and leads to better bounds, uniform in $q$, of the bilinearity. When $g=1$, it is known \cite{BBT15} 
that an asymmetric algorithm can always be symmetrized (i.e. there always exists a symmetric version of an asymmetric algorithm). 
However, for greater values of $g$, it may not be the case. Thus, it is of interest to know an effective construction of this asymmetric algorithm.
So far, no effective implementation has been proposed  for such an  algorithm. 
In this article, we detail a strategy to effectively construct  
asymmetric algorithms with higher degree but without derivated evaluation.

\subsection{Organization of the paper and new results}

In Section \ref{GeneRandriam}, we give an explicit translation  of the generalization of the Chudnovsky and Chudnovsky algorithm 
given by Randriambololona \cite[Theorem 3.5]{Randriam}. Then in Section \ref{effect}, by defining a new design of this algorithm, we give a strategy of 
construction and implementation. In particular, thanks to a suitable representation of the Riemann-Roch spaces, we present the first construction of asymmetric effective algorithms 
of multiplication in finite fields. These algorithms are tailored to hardware implementation and they allow computations to be parallelized while maintaining 
a low number of bilinear multiplications. In Section \ref{complex}, we give an analysis of the not asymptotical complexity of this algorithm. Finally, in Sections \ref{effective1}, \ref{effective2}, and \ref{effective3}, 
we give examples with the finite field $\F_{16^{13}}$ using only rational places, $\F_{4^{13}}$ using also places of degree two and $\F_{2^{13}}$ using also places of degree four.

\section{Multiplication algorithms of type Chudnovsky : Generalization of Randriambololona}\label{multalg}\label{GeneRandriam}

In this section we present  a generalization of Chudnovsky type algorithms, introduced in~\cite[Theorem 3.5]{Randriam} by Randriambololona, which is possibly asymmetric. 
Since our aim is to describe explicitly the effective construction of  this asymmetric algorithm, we transform the representation of this algorithm, initially made in the abstract geometrical language, 
in the more explicit language of  algebraic function fields. 
%
A comprehensive course on algebraic function fields  can be found in~\cite{Stich}.
Only the elementary terminology used along this paper is introduced.

Let $F/{\mathbb F}_q$ be an algebraic function field over the finite field ${\mathbb F}_q$
of genus $g(F)$. We denote by $N_1(F/{\mathbb F}_q)$ the number of places of degree one of $F$ over
${\mathbb F}_q$. If $D$ is a divisor, ${\mathcal L}(D)$ denotes the Riemann-Roch space
associated to $D$. We denote by ${\mathcal O}_Q$ the valuation ring of the place $Q$ and
by $F_Q $ its residue class field ${\mathcal O}_Q/Q$
which is isomorphic to $\F_{q^{\deg Q}}$ where $\deg Q$ is the degree of the place $Q$.

In the framework of  algebraic function fields,  
the result \cite[Theorem 3.5]{Randriam} of Randriambololona can be stated as in Theorem~\ref{AlgoRandriam}.
Note that we do not take into account derivated evaluations, since we are not interested in asymptotic results. 
It means that we  describe this asymmetric algorithm with the divisor $G=P_1+ \cdots +P_N$ where the $P_i$ are pairwise disctinct closed points of degree $\deg P_i=d_i$.

Let us define the following Hadamard product in $\F_{q^{l_1}} \times \F_{q^{l_2}} \times \cdots \times \F_{q^{l_N}}$, where the $l_i$'s denote positive integers, by $(u_1,\ldots,u_{N}) \odot (v_1,\ldots,v_{N})=(u_1v_1,\ldots,u_{N}v_{N})$.

\begin{theorem}\label{AlgoRandriam}

Let $F /\F_{q}$ be an algebraic function field of genus $g$ over $\F_q$.
Suppose there exists a place $Q$ of degree $n$. 
Let ${\mathcal P}=\{P_1,\ldots , P_N\}$ be a set of $N$ places of arbitrary degree not containing the place $Q$.
Suppose there exist two effective divisors  $D_1,D_2$ of  $F /\F_{q} $ such that:
\begin{enumerate}
\item[(i)] The place $Q$ and the places of ${\mathcal P}$ are not in the support of the divisors $D_1$ and $D_2$.
\item[(ii)] The natural evaluation maps $E_i$ for $i=1,2$ defined as
$$E_i~:~~\left\{ \begin{array}{ccc}
\mathcal{L}(D_i)  &\longrightarrow &  \F_{q^{n}} \simeq F_Q\   \\ 
f&\longmapsto &f(Q) \end{array}
\right.
$$
are surjective.
\item[(iii)] The natural evaluation  map
$$ 
T~:~~~\left\{ \begin{array}{ccc}
\mathcal{L}(D_1+D_2) &\longrightarrow &\F_{q^{\deg P_1}} \times \F_{q^{\deg P_2}} \times \cdots \times \F_{q^{\deg P_N}}  \\
f&\longmapsto &(f(P_1),f(P_2), \ldots , f(P_N)) \end{array}
\right.
$$
is injective.
\end{enumerate}
Then for any two elements $x,y$ in $\F_{q^n}$, we have:
$$xy=E_Q\circ T^{-1}_{\mid Im \hbox{ }T}\left( T\circ E_1^{-1}(x)\odot T\circ E_2^{-1}(y)  \right),$$ 
where $E_Q$ denotes the canonical projection from the valuation ring ${\mathcal O}_Q$ of the place $Q$ in its residue class field $F_Q$, $\circ$ the standard composition map, 
$T^{-1}_{\mid Im \hbox{ }T}$ the restriction of the inverse map of $T$ on the image of $T$, $E_i^{-1}$ the inverse map of the restriction of the map $E_i$ on the quotient group $\mathcal{L}(D_i)/\ker E_i$ 
and $\odot$ the Hadamard product in $\F_{q^{\deg P_1}} \times \F_{q^{\deg P_2}} \times \cdots \times \F_{q^{\deg P_N}}$; and
$$\mu_{q}(n) \leq \sum_{i=1}^{N} \mu_{q}(\deg P_i).$$

\end{theorem}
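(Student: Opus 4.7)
The plan is to show that evaluation at the places $P_1,\ldots,P_N$ is a ``multiplicative fingerprint'' that determines an element of $\mathcal{L}(D_1+D_2)$ uniquely (by injectivity of $T$), and that the product of two lifts in $\mathcal{L}(D_1)$ and $\mathcal{L}(D_2)$ of $x$ and $y$ gives the correct value $xy$ upon evaluation at $Q$.

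\medskip

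First, I would unpack the right-hand side. Given $x \in \F_{q^n} \simeq F_Q$, hypothesis (ii) says $E_1$ is surjective, so one may choose a preimage $f \in \mathcal{L}(D_1)$ with $f(Q)=x$; any two choices differ by an element of $\ker E_1$, so $E_1^{-1}(x)$ is well-defined as an element of the quotient $\mathcal{L}(D_1)/\ker E_1$. Likewise, pick $g \in \mathcal{L}(D_2)$ with $g(Q)=y$. By (i) the places in $\mathcal{P}\cup\{Q\}$ lie outside the supports of $D_1$ and $D_2$, so all the evaluations $f(P_i)$, $g(P_i)$, $f(Q)$, $g(Q)$ make sense in the respective residue class fields.

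\medskip

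Next, the key algebraic identity to verify is that evaluation at a place is a ring homomorphism on the local ring, hence
\[
T(fg) \;=\; \bigl((fg)(P_1),\ldots,(fg)(P_N)\bigr) \;=\; \bigl(f(P_1)g(P_1),\ldots,f(P_N)g(P_N)\bigr) \;=\; T(f) \odot T(g).
\]
Since $f \in \mathcal{L}(D_1)$ and $g \in \mathcal{L}(D_2)$, the product satisfies $fg \in \mathcal{L}(D_1+D_2)$, so $T(f)\odot T(g)$ lies in $\mathrm{Im}\,T$, and by the injectivity of $T$ (hypothesis (iii)) the element $fg$ is recovered as $T^{-1}_{|\mathrm{Im}\,T}(T(f)\odot T(g))$. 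Evaluating at $Q$ via $E_Q$ yields $(fg)(Q) = f(Q)g(Q) = xy$, which is the claimed formula. The minor subtlety — that the formula does not depend on the choice of preimages — follows because any two lifts of $x$ differ by some $h \in \ker E_1$, and $(hg)(Q) = h(Q)g(Q)=0$, so the final value is unaffected.

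\medskip

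For the complexity inequality, I would observe that all maps in the composition $E_Q \circ T^{-1}_{|\mathrm{Im}\,T}$, as well as $E_1^{-1}$, $E_2^{-1}$, and $T$, are $\F_q$-linear, since they are built from divisor evaluations and their inverses on quotient/image spaces. The only bilinear operation (in $x$ and $y$) is the Hadamard product $T\circ E_1^{-1}(x) \odot T \circ E_2^{-1}(y)$, which decomposes componentwise into $N$ independent multiplications, the $i$-th taking place in the residue field $\F_{q^{\deg P_i}}$. By definition, each such multiplication may be realized with at most $\mu_q(\deg P_i)$ $\F_q$-bilinear multiplications, yielding the bound $\mu_q(n) \le \sum_{i=1}^N \mu_q(\deg P_i)$.

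\medskip

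I expect no serious obstacle: the statement is essentially a transcription of \cite[Theorem 3.5]{Randriam} into the language of algebraic function fields, and the main content is the multiplicativity of evaluation together with the dimension/injectivity conditions that force $T^{-1}$ to exist on the image. The most delicate point is purely notational — keeping track of the fact that $E_i^{-1}$ is the inverse only on a quotient and checking that the ambiguity is absorbed by the kernel — but this is settled by the argument above.
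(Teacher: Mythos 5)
Your proposal is correct and follows essentially the same route as the paper: lift $x$ and $y$ to $\mathcal{L}(D_1)$ and $\mathcal{L}(D_2)$ via the surjective evaluations at $Q$, use multiplicativity of the residue maps to get $T(fg)=T(f)\odot T(g)$ with $fg\in\mathcal{L}(D_1+D_2)$, recover $fg$ by injectivity of $T$, and evaluate at $Q$; the bilinear cost is then carried entirely by the $N$ componentwise products in the residue fields. This matches the argument the paper gives (in its Section 3 product theorem, which reprises the same steps), so no further comment is needed.
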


\begin{remark} 
The condition $D_1 = D_2$ (modulo the group of principal divisors) is not a sufficient condition to have a symmetric algorithm. 
Indeed, note that even if $D_1=D_2$ then this algorithm can be not symmetric in the sens of Definition (\ref{defalgobil}) if it uses places 
of degree strictly greater than one and if the bilinear multiplications in the residue class fields of these places are not computed (via the operation $\odot$) 
with a symmetric algorithm.
However, for simplicity,  we will say that an algorithm of type Chudnovsky is  symmetric 
when $D_1=D_2$ (modulo the group of principal divisors). 
Indeed, the interest of such an asymmetric algorithm is to avoid the problem of $2$-torsion elements in the divisor class group of the algebraic function field $F/\F_{q}$
in order to have more flexibility in the choice of the algebraic function field $F/\F_{q}$ since the conditions (ii) and (iii) become easier to satisfy (cf. \cite[Remark 3.7]{Randriam}).

Note also that in this presentation, we require that the divisors $D_1$ and $D_2$ are positive divisors in contrast to Theorem 3.5 in \cite{Randriam}.  
Indeed, in our context of effective construction, it is important to have ${\mathcal L}(D_i) \subseteq {\mathcal L}(D_1+D_2)$, which 
 is the case if the divisors $D_1$ and $D_2$ are positive divisors (cf. Remark \ref{remarkeffectivediv}).
 
Finally, for simplicity,  we will use the same representation 
for $\F_{q^{\deg P_i}}$ and $\F_{q^{\deg P_j}}$ when 
$\deg P_i=\deg P_j$. Actually, we  could only suppose 
 that the products in $\F_{q^{\deg P_i}}$ and $\F_{q^{\deg P_j}}$ use  bilinear multiplication algorithms having the same bilinear complexity.
 
\end{remark}

\section{Effective algorithm}\label{effect}

\subsection{Method and strategy of implementation}\label{construction}
The construction of the algorithm is based on the choice of the place $Q$ of degree $n$,  
the effective divisors $D_1$ and $D_2$  of degree $n+g-1$, the bases of spaces ${\mathcal L}(D_1) $, ${\mathcal L}(D_2) $
 and ${\mathcal L}(D_1+D_2)$ and 
the basis of the residue class field $F_Q$  of the place $Q$. 

In practice, following the ideas of  \cite{Ballet2}, we take as a divisor $D_1$ one place of degree $n+g-1$.
This has the advantage to solve both the problem of the support of divisor $D_1$ 
and the problem of the effectivity of the divisor $D_1$. For the same reasons, the divisor $D_2$ is also
chosen as a place of degree $n+g-1$. 
Furthermore, we require additional properties described below.

\subsection{Finding good places $D_1$, $D_2$ and $Q$}\label{si}

In order to obtain the good places, we draw them at random and   check that they satisfy the required 
conditions. We proceed as follows:

\begin{enumerate}\label{placeswithgoodproperties}
\item We draw at random an irreducible polynomial $\mathcal{Q}(x)$ of degree $n$ in $\F_q[X]$ and check that this polynomial is:
     \begin{enumerate}
           \item Primitive.
           \item Totally decomposed in the algebraic function field $F/\F_q$ (which implies that there exists a place $Q$ of degree n  
           above the polynomial $\mathcal{Q}(x)$).           
     \end{enumerate}
     \item We choose a place $Q$ of degree $n$ 
     among the places of $F/\F_q$  lying above the polynomial $\mathcal{Q}(x)$. 
     \item We draw at random a place $D_1$ of degree $n+g-1$ and check that $D_1-Q$ is a non-special divisor of degree $g-1$ 
     i.e. $\dim {\mathcal L}(D_1-Q)=0$.
     \item We draw at random a place $D_2$ of degree $n+g-1$ and check that $D_2-Q$  is a non-special divisor of degree $g-1$ i.e. $\dim(D_2-Q)=0$.
    \end{enumerate}

%

\begin{remark}\label{remarkeffectivediv}
Clearly, our method relies on the existence of places $Q$, $D_1$ and $D_2$ as defined above and
such that $D_1 - Q$ and $D_2 - Q$ are non-special divisor of degree $g-1$.
We say some words about this.

On the place $Q$, a sufficient condition for the existence of at least one place of degree $n$ is given by the following inequality by \cite[Corollary V.2.10 (c)]{Stich}:
$$2g+1 \leq q^{\frac{n-1}{2}}\left(q^{\frac{1}{2}}-1\right).$$

Then, we are sure of the existence of a non-special divisor of degree $g-1$ when $q\geq 4$  \cite{BalletBrigand}.
The larger $q$ is, the larger  the probability to draw a non-special divisor of degree $g-1$ becomes (Proposition 5.1 \cite{BalletRollandRitzenthaler}),
but not necessarily as a difference of two places: this is an open problem. 
However, looking for non-special divisors of degree $g-1$ as a difference of two places
has many advantages.

Most of all, this solves easily 
the problem of the support of divisors $D_1$ and $D_2$ (condition (i) of Theorem 
\ref{AlgoRandriam}) as well as the problem of the effectivity of these divisors.
Indeed, in our context of construction, it is important to have 
${\mathcal L}(D_i) \subseteq {\mathcal L}(D_1+D_2)$, which 
 is the case if the divisors $D_1$ and $D_2$ are effective divisors. 
 However, the property to have simultaneously an effective divisor 
 (with the required properties) without having given places in its support is difficult to obtain theoretically because the method 
 of the support moving (cf. \cite{piel}), 
 which is a direct consequence of Strong Approximation Theorem (cf.  \cite[Proof of Theorem I.6.4]{Stich}), 
 has the drawback to imply the loss of effectivity.

Furthermore, in practice, it is easy to find $Q$ and the divisors $D_i$ satisfying the required properties since there exist many such places  in our context. 
However, it is not true in the  general case. 
For instance, this fails when we consider an elliptic curve with only one rational point since for any elliptic curve,
there exists a non-special divisor of degree $g-1=0$ if and only if the divisor class number $h$ is $>1$, i.e. $N_1\geq 2$ 
(cf. \cite[Section 3.2]{BalletBrigand}). 

\end{remark}

\subsection{Choosing good bases of the spaces } \label{choixbases}

\subsubsection{The residue field $F_Q$}\label{baseFQ}

When we take a place $Q$ of degree $n$ lying above a  polynomial in $\F_q[X]$, we mean that the residue class field 
is the finite field $\F_{q^n}$ for which we choose as a representation basis  the  canonical basis $\mathcal B_Q$ generated 
by a root $\alpha$ of the polynomial $\mathcal{Q}(x)$, namely ${\mathcal B}_Q=( 1, \alpha, \alpha^2,..., \alpha^{n-1})$. 
Note that if we wish to use a normal basis as a representation basis, it is convenient to find a place $Q$ above a normal polynomial 
$\mathcal{Q}(x)$ and to make a change of basis between the canonical basis and the normal basis $( \alpha, \alpha^{q}, \alpha^{q^2}...,\alpha^{q^{n-1}})$. 
However, even in this case, it is necessary in our algorithm to preserve the canonical basis as basis of the residue field $F_Q$ 
because we need to have the constant component in the bases of the Riemann-Roch spaces 
${\mathcal L}(D_i)$ for $i\in \{1,2\}$ (cf. Section \ref{baseLD1}).
From now on we identify $\F_{q^n}$ to $F_Q$, 
as the residue class field $F_Q$ of the place $Q$ is isomorphic to the finite field $\F_{q^n}$.

\subsubsection{The Riemann-Roch spaces $\mathcal{L}(D_1)$ and $\mathcal{L}(D_2)$}\label{baseLD1}

Clearly, the choice of $D_i$, $i\in \{1,2\}$ and $Q$ of Section \ref{si} implies that  the maps $E_i$ of Theorem~\ref{AlgoRandriam} are isomorphisms, since
 $\deg (D_i)=n+g-1$,  $\dim {\mathcal L}(D_i-Q)=0$ and ${\mathcal L}(D_i-Q)=Ker(E_i)$.
Thereby, we choose as basis of ${\mathcal L}(D_i)$ the reciprocal image $\mathcal{B}_{D_i}$ of the basis $\mathcal{B}_Q=(\phi_1,\ldots,\phi_n)$ of  $F_Q$ 
by the evaluation map $E_i$, namely $\mathcal{B}_{D_i}=(E_i^{-1}(\phi_i),\ldots,E_i^{-1}(\phi_n))$. Note that by Section \ref{baseFQ}, the choice of the basis of 
the residue field $F_Q$ implies that $\phi_1=1$ and so $E_1^{-1}(1)=E_2^{-1}(1)=1$. Let us denote $\mathcal{B}_{D_i}=(f_{i,1},...,f_{i,n})$ with $f_{i,1}=1$ for $i=1,2$.

\subsubsection{The Riemann-Roch space $\mathcal{L}(D_1+D_2)$} \label{baseLD1D2}

Note that since  $D_1$ and $D_2$ are effective divisors, we have ${\mathcal L}(D_1)\subset {\mathcal L}(D_1+D_2)$ and ${\mathcal L}(D_2)\subset {\mathcal L}(D_1+D_2)$. 

\begin{lemma}\label{intersectioneffectivedivisor}
Let $D_1$ and $D_2$ be two effective divisors with disjoint supports. 
Then 
$${\mathcal L}(D_1)\cap {\mathcal L}(D_2)= \F_q.$$
\end{lemma}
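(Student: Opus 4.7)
The plan is to prove the two inclusions separately, the nontrivial one being $\mathcal{L}(D_1)\cap\mathcal{L}(D_2)\subseteq \F_q$.

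First, I would dispatch the easy inclusion $\F_q\subseteq \mathcal{L}(D_1)\cap\mathcal{L}(D_2)$: since both $D_1$ and $D_2$ are effective, for any nonzero constant $c\in\F_q$ we have $(c)=0$, so $(c)+D_i=D_i\geq 0$, hence $c\in\mathcal{L}(D_i)$ for $i=1,2$; and of course $0\in \mathcal{L}(D_i)$.

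For the reverse inclusion, I would take $f\in \mathcal{L}(D_1)\cap \mathcal{L}(D_2)$ and examine its behaviour at an arbitrary place $P$ of $F/\F_q$. By definition of the Riemann–Roch space, $v_P(f)\geq -v_P(D_i)$ for both $i=1,2$. The key observation is that, because $\mathrm{supp}(D_1)\cap\mathrm{supp}(D_2)=\emptyset$, the place $P$ lies outside at least one of the two supports, say $\mathrm{supp}(D_j)$; then $v_P(D_j)=0$, so $v_P(f)\geq 0$. Since $P$ was arbitrary, $f$ has no poles at all, which means $f\in \mathcal{L}(0)$. Using the standing convention that $\F_q$ is the full constant field of $F$, $\mathcal{L}(0)=\F_q$ (cf. \cite[Corollary I.1.20]{Stich}), and we conclude $f\in \F_q$.

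I do not expect any real obstacle here; the only point requiring a word of justification is the identification $\mathcal{L}(0)=\F_q$, which is where the hypothesis that $\F_q$ is algebraically closed in $F$ is used implicitly. Everything else is a direct unpacking of the definition of $\mathcal{L}(D)$ together with the disjoint-support hypothesis, which is exactly what forces the pole divisor of $f$ to be zero.
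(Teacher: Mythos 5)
Your proof is correct and follows essentially the same route as the paper's: both arguments reduce to the observation that the disjointness of the supports forces every pole of $f$ to be excluded, so $f$ has trivial pole divisor and lies in $\mathcal{L}(0)=\F_q$. The only cosmetic difference is that you argue directly (every place lies outside at least one support, hence $v_P(f)\geq 0$ everywhere), whereas the paper argues by contradiction from a hypothetical non-constant $f$; your phrasing is, if anything, slightly more careful about where the disjoint-support hypothesis enters.
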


\begin{proof}
It is clear that $\F_q\subset {\mathcal L}(D_1)\cap {\mathcal L}(D_2)$ because the divisors are effective.
Suppose that the function $f \in {\mathcal L}(D_1)\cap {\mathcal L}(D_2)$ is such that $f\notin \F_q$. Then there exist $P_1$ in the support of $D_1$ 
and $P_2$ in the support of $D_2$ such that $v_{P_1}(f)\leq -1$ and $v_{P_2}(f)\leq -1$. But then the function $f$ admits a pole of order at least 1 at $P_1$ 
and a pole  of order at least 1 at $P_2$, which is impossible because the supports are disjoint. So, $f\in \F_q$ and the proof is complete.
\end{proof}

\begin{proposition}\label{supplementaryspaces}
Let $D_1$, $D_2$ and $Q$ be places having the properties described in (\ref{placeswithgoodproperties}).
Consider the map $\Lambda : {\mathcal L}(D_1+D_2) \rightarrow F_Q$ such that $\Lambda (f)=f(Q)$ for $f\in {\mathcal L}(D_1+D_2)$.
There exists a vector space $ {\mathcal M}\subseteq \ker \Lambda$ of dimension $g$ such that 
$${\mathcal L}(D_1+D_2)={\mathcal L}(D_1) \oplus  {\mathcal L}_r(D_2) \oplus {\mathcal M},$$ where  ${\mathcal L}_r(D_2)$ is such that 
$${\mathcal L}(D_2)= \F_q \oplus {\mathcal L}_r(D_2)$$ and $\oplus$ denotes 
the direct sum. In particular, if $g=0$, then ${\mathcal M}= Ker\Lambda$ is equal to $\{0\}$.
\end{proposition}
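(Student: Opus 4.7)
The plan is a pure dimension count, using Riemann--Roch to pin down the sizes of all Riemann--Roch spaces in play, Lemma~\ref{intersectioneffectivedivisor} to promote the relevant sum to a direct sum, and a complement argument to produce~$\mathcal{M}$.

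First I would establish the basic dimensions. Since $\deg D_i = n+g-1 \geq 2g-1$, Riemann--Roch gives $\dim \mathcal{L}(D_i) = n$ and $\dim \mathcal{L}(D_1+D_2) = 2n+g-1$. The non-speciality hypothesis $\dim \mathcal{L}(D_i - Q) = 0$ from (\ref{placeswithgoodproperties}) means $\ker E_i = \{0\}$, so the $E_i$ are $\F_q$-isomorphisms $\mathcal{L}(D_i) \cong F_Q$. In particular $\Lambda|_{\mathcal{L}(D_1)} = E_1$ is surjective, so $\Lambda$ itself is surjective onto $F_Q$ and $\dim \ker \Lambda = n+g-1$.

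Next I would set $V := \mathcal{L}(D_1) + \mathcal{L}_r(D_2)$ and argue this sum is direct. The places $D_1$ and $D_2$ being distinct (drawn independently in (\ref{placeswithgoodproperties})), their supports are disjoint, so Lemma~\ref{intersectioneffectivedivisor} gives $\mathcal{L}(D_1) \cap \mathcal{L}(D_2) = \F_q$; since $\mathcal{L}_r(D_2)$ was chosen complementary to $\F_q$ in $\mathcal{L}(D_2)$, this forces $\mathcal{L}(D_1) \cap \mathcal{L}_r(D_2) = \{0\}$. Hence $\dim V = n + (n-1) = 2n-1$. Because $\Lambda|_V$ is already surjective onto $F_Q$ via $E_1$, the kernel formula yields $\dim(V \cap \ker \Lambda) = (2n-1) - n = n-1$.

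Finally I would define $\mathcal{M}$ to be any $\F_q$-linear complement of $V \cap \ker \Lambda$ inside $\ker \Lambda$. Then $\mathcal{M} \subseteq \ker \Lambda$ by construction, $\dim \mathcal{M} = (n+g-1) - (n-1) = g$, and $\mathcal{M} \cap V \subseteq \mathcal{M} \cap (V \cap \ker \Lambda) = \{0\}$, so $V + \mathcal{M}$ is a direct sum of dimension $2n - 1 + g = \dim \mathcal{L}(D_1 + D_2)$, giving the full decomposition. When $g = 0$ everything collapses to $\mathcal{M} = \{0\}$. The whole argument is frictionless counting; the one point deserving explicit care is that Lemma~\ref{intersectioneffectivedivisor} requires disjoint supports, so one must stress that $D_1$ and $D_2$ are chosen in (\ref{placeswithgoodproperties}) as two \emph{distinct single places}, not as arbitrary effective divisors.
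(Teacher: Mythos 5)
Your proof is correct and follows essentially the same route as the paper: Riemann--Roch dimension counts, Lemma~\ref{intersectioneffectivedivisor} to get the direct sum ${\mathcal L}(D_1)\oplus{\mathcal L}_r(D_2)$ of dimension $2n-1$, and a complement inside $\ker\Lambda$ to produce ${\mathcal M}$. In fact your final step is slightly more complete than the paper's, which only records ${\mathcal L}(D_i)\cap\ker\Lambda=\{0\}$ before asserting the existence of ${\mathcal M}$, whereas you explicitly compute $\dim(V\cap\ker\Lambda)=n-1$ and take a complement of it in $\ker\Lambda$; your closing remark that the lemma needs $D_1$ and $D_2$ to be \emph{distinct} places (so that their supports are disjoint) is also a point the paper leaves implicit.
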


\begin{proof}
The divisors $D_1$ and $D_2$ which are effective divisors of degree $n+g-1$ have the same dimension $n$ by Section \ref{baseLD1}. 
Moreover, as $\deg (D_1+D_2 )> 2g-2$, the divisor $D_1+D_2$ is non-special and $\dim {\mathcal L}(D_1+D_2)=2n+g-1$.
Hence, $\dim Ker\Lambda= n+g-1$ and by Lemma \ref{intersectioneffectivedivisor}, the spaces ${\mathcal L}(D_1)$ and  
${\mathcal L}_r(D_2)$ consist on a direct sum of dimension $\dim \biggr( {\mathcal L}(D_1) \oplus  
{\mathcal L}_r(D_2)\biggl)=2n-1$. 
In Section  \ref{baseLD1}, we have showed that $Ker(E_i)=\{0\}$. Thus, the set ${\mathcal L}(D_i)\cap Ker\Lambda=\{0\}$ as well.
Hence, there exists a vector space ${\mathcal M}\subseteq Ker\Lambda$ of dimension $g$ 
which gives the result. 
\end{proof}


Hence, we choose as basis of ${\mathcal L}(D_1+D_2)$ the basis $\mathcal{B}_{D_1+D_2}$ defined by: 

$$\mathcal{B}_{D_1+D_2}=(f_1, \ldots ,f_n,f_{n+1},\ldots , f_{2n+g-1})$$
where $\mathcal{B}_{D_1}=(f_1, \ldots ,f_n)$ is the basis of ${\mathcal L}(D_1)$,
$(f_{n+1},\ldots , f_{2n-1})$ is a basis of ${\mathcal L}_r(D_2)$ such that $f_{n+j}=f_{2,j+1}\in \mathcal{B}_{D_2}$ with $\mathcal{B}_{D_1}$ and $\mathcal{B}_{D_2}$ 
defined in Section \ref{baseLD1} and $\mathcal{B}_{{\mathcal M}}=(f_{2n},\ldots , f_{2n+g-1})$ is a basis  of ${\mathcal M}$.

\subsection{Product of two elements in ${\mathbb F}_{q^n}$}

In this section, we use as representation bases of spaces $F_Q$, ${\mathcal L}(D_i)$ $(i\in\{1,2\})$, ${\mathcal L}(D_1+D_2)$, 
the bases defined in Section \ref{choixbases}. The product of two elements in ${\mathbb F}_{q^n}$ 
is computed by the algorithm of Chudnovsky and Chudnovsky.
Let $x=(x_1,\ldots,x_n)$ and $y=(y_1,\ldots,y_n)$ be 
two elements of ${\mathbb F}_{q^n}$ given by their components over ${\mathbb F}_{q}$
relative to the chosen basis $\mathcal{B}_Q$. 
According to the previous notation, we can consider that $x$ and $y$ are 
identified to the following elements:

$$f_x=\sum_{i=1}^n x_if_{1,i} \in {\mathcal L}(D_1)\quad \hbox{and} \quad f_y=\sum_{i=1}^n y_if_{2,i}\in {\mathcal L}(D_2).$$

The product $f_xf_y$ of the two elements $f_x$ and $f_y$  is their product in 
the valuation ring ${\mathcal O}_Q$. This product lies in ${\mathcal L}(D_1+D_2)$ since $D_1$ and $D_2$ are effective divisors.
We consider that $x$ and $y$ are respectively the elements
$f_x$ and $f_y$ embedded in the Rieman-Roch space ${\mathcal L}(D_1+D_2)$, via respectively the embeddings 
$I_i: {\mathcal L}(D_i) \longrightarrow  {\mathcal L}(D_1+D_2)$ defined by $I_1(f_x)$ and  $I_2(f_y)$ as follows. 
If, $f_x$ and $f_y$ have respectively coordinates $f_{x_i}$ and $f_{y_i}$  in $\mathcal{B}_{D_1+D_2}$ where $i\in\{1, \ldots , 2n+g-1\}$, we have:
$I_1(f_x)=(f_{x_1}:= x_1,\ldots , f_{x_n}:=x_n, 0,\ldots , 0)$ and $I_2(f_y)=( f_{x_1}:=y_1, 0, \ldots , 0, f_{y_{n+1}}:=y_2,\ldots , f_{y_{2n-1}}:=y_{n}, 0, \ldots 0)$.
 Now it is clear that knowing $x$ (resp. $y$) or
$f_x$ (resp. $f_y$) by their coordinates is the same thing.

\begin{theorem} 
Let $P_{{\mathcal M}^s}$ be the projection of ${\mathcal L}(D_1+D_2)$ 
onto ${\mathcal M}^s={\mathcal L}(D_1) \oplus  {\mathcal L}_r(D_2)$ and let $\Lambda$ be the map defined as in Proposition (\ref{supplementaryspaces}).
Then, for any elements $x,y\in \F_{q^n}$, the product of $x$ by $y$ is such that
$$xy=\Lambda\circ P_{{\mathcal M}^s}\left(T^{-1}_{\mid Im \hbox{ }T} \left(\strut T\circ I_1\circ E_1^{-1}(x)\odot T\circ I_2\circ E_2^{-1}(y)\right)\right),$$ where $\circ$ denotes 
the standard composition map, $T^{-1}_{\mid Im \hbox{ }T}$ the restriction of the inverse map 
of $T$ on the image of $T$, and $\odot$ the Hadamard product as in Theorem \ref{AlgoRandriam}.
\end{theorem}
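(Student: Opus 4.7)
The plan is to recognize this statement as an explicit reformulation of Theorem~\ref{AlgoRandriam} in the basis framework set up in Section~\ref{choixbases} and Proposition~\ref{supplementaryspaces}. Concretely, I would match the abstract reduction $E_Q : \mathcal{O}_Q \to F_Q$ of Theorem~\ref{AlgoRandriam} with the composite $\Lambda \circ P_{\mathcal{M}^s}$ on $\mathcal{L}(D_1+D_2)$, and interpret the maps $E_i^{-1}$ of Theorem~\ref{AlgoRandriam} (defined on the quotient $\mathcal{L}(D_i)/\ker E_i$) as genuine inverses followed by the inclusions $I_i : \mathcal{L}(D_i) \hookrightarrow \mathcal{L}(D_1+D_2)$, which are well-defined because $D_1$ and $D_2$ are effective.

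First I would verify that $E_i$ is an isomorphism: Riemann--Roch together with $\deg D_i = n+g-1 > 2g-2$ and the non-speciality of $D_i - Q$ from Section~\ref{si} yield $\dim \mathcal{L}(D_i) = n = \dim_{\F_q} F_Q$ and $\ker E_i = \mathcal{L}(D_i - Q) = \{0\}$, so $f_x := E_1^{-1}(x) \in \mathcal{L}(D_1)$ and $f_y := E_2^{-1}(y) \in \mathcal{L}(D_2)$ are uniquely determined. Then I would carry out the central computation: the product $f_x f_y$, taken in the function field $F/\F_q$, lies in $\mathcal{L}(D_1+D_2)$, and because the support of $D_1+D_2$ misses every $P_j$ by condition (i) of Theorem~\ref{AlgoRandriam}, evaluation at each $P_j$ is a ring homomorphism on $\mathcal{O}_{P_j}$, giving $T(f_x f_y) = T(I_1 f_x) \odot T(I_2 f_y)$; injectivity of $T$ on $\mathcal{L}(D_1+D_2)$ (condition (iii)) then lets $T^{-1}_{\mid \Ima T}$ recover $f_x f_y$ exactly.

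It remains to evaluate at $Q$. Since $Q$ is not in the support of $D_1+D_2$, the function $f_x f_y$ belongs to $\mathcal{O}_Q$, and $\Lambda(f_x f_y) = f_x(Q)\,f_y(Q) = E_1(f_x)\,E_2(f_y) = xy$ in $F_Q \simeq \F_{q^n}$. Inserting $P_{\mathcal{M}^s}$ in front of $\Lambda$ is mathematically harmless because $\mathcal{M} \subseteq \ker \Lambda$ by construction in Proposition~\ref{supplementaryspaces}, so $\Lambda \circ P_{\mathcal{M}^s} = \Lambda$ on $\mathcal{L}(D_1+D_2)$; its role is purely algorithmic, namely to discard the $g$ coordinates relative to $\mathcal{M}$ in the basis $\mathcal{B}_{D_1+D_2}$ so that the final evaluation reduces to reading off the $\mathcal{L}(D_1)$-part of the expansion thanks to the basis choice made in Section~\ref{baseLD1}. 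I do not expect any genuine obstacle: the difficulty is purely notational, in making fully explicit the embeddings and the projection that are already implicit in the abstract statement of Theorem~\ref{AlgoRandriam}.
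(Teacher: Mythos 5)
Your proposal is correct and follows essentially the same route as the paper: identify $f_x=E_1^{-1}(x)$, $f_y=E_2^{-1}(y)$, observe that $f_xf_y\in{\mathcal L}(D_1+D_2)$ by effectivity of the $D_i$, recover $f_xf_y$ from the Hadamard product of evaluations via injectivity of $T$, and conclude by evaluating at $Q$ with $\Lambda\circ P_{{\mathcal M}^s}=\Lambda$ since ${\mathcal M}\subseteq\ker\Lambda$. The only difference is that you spell out the ring-homomorphism and projection steps that the paper leaves implicit by citing Theorem~\ref{AlgoRandriam} and Proposition~\ref{supplementaryspaces}.
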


\begin{proof}
Let $\F_{q^n}$ be a finite extension of $\F_q$ of degree $n$ with a representation defined in Section \ref{baseFQ}. For any two elements $x,y\in \F_{q^{n}}$, 
there exist two elements $f_x$ and $f_y$ respectively in ${\mathcal L}(D_1)$ and ${\mathcal L}(D_2)$ defined as in Section \ref{baseLD1}
such that $E_1(f_x)=f_x(Q)=x$ and $E_2(f_y)=f_y(Q)=y$ where $f(Q)$ denotes the class of $f$ in the residue class field $F_Q$ of the place $Q$. 
Thus, $$xy=f_x(Q)f_y(Q)=(f_xf_y)(Q)$$ and so computing the product $xy$ is equivalent to computing the product $f_xf_y$.
Moreover, since the divisors $D_1$ and $D_2$ are effective by the assumptions of Theorem \ref{AlgoRandriam} and Section \ref{construction}, we have
 $<{\mathcal L}(D_1){\mathcal L}(D_2)>\subset {\mathcal L}(D_1+D_2)$ where $<{\mathcal L}(D_1){\mathcal L}(D_2)>$ denotes the vector space generated by 
 the products $f_xf_y$ with $f_x\in {\mathcal L}(D_1)$,  $f_y\in {\mathcal L}(D_2)$ and so $f_xf_y\in {\mathcal L}(D_1+D_2)$.
Now, the principle of the algorithm is to compute $f_xf_y$ via the evaluation map $T$. In this aim, we represent the elements $f_x$ and 
$f_y$ in ${\mathcal L}(D_1+D_2)$ respectively by $I_1(f_x)$ and $I_2(f_y)$ defined in this section. Then by Theorem \ref{AlgoRandriam}, 
$$h=f_xf_y= T^{-1}_{\mid Im \hbox{ }T}\left(T\circ I_1(f_x)\odot T\circ I_2(f_y)\right) \hbox{ and }h(Q)=xy.$$ 
Then, the proof is complete since $h(Q)=\Lambda \circ P_{{\mathcal M}^s}(h)$.
\end{proof}

We can now present the setup algorithm and the multiplication algorithm.
Note that the setup algorithm is only done once.

\begin{algorithm}[H] 
\caption{Setup algorithm}
\begin{algorithmic}
\Require $F/{\mathbb F}_{q}, ~  Q,  D_1, D_2, P_1,\ldots, P_{N}$.
\Ensure  $T \hbox{ and } T^{-1}.$
        \begin{enumerate}
             \item The representation of the finite field $\F_q=<a>$, where $a$ is a primitive element i.e. a generator of the associated cyclic group, is fixed.
             \item The function field $F/{\mathbb F}_{q}$, the place $Q$, the
divisors $D_1$ and $D_2$ and the points $P_1,\ldots, P_{N}$ are such that Conditions (ii) and (iii) in Theorem \ref{AlgoRandriam} are satisfied.
In addition, we require that $\sum_{1\le i \le N}\deg P_i = 2n+g-1$.
          \item Represent $\F_{q^n}$ in the canonical basis ${\mathcal B}_Q=\{ 1, \alpha, \alpha^2,..., \alpha^{n-1}\}$, where $\F_{q^n}=<\alpha>$ with $\alpha$ a primitive element 
             as in Section \ref{baseFQ}.
          \item Construct a basis
$(f_1, \ldots, f_n,f_{n+1}, \ldots, f_{2n+g-1})$ of ${\mathcal L}(D_1+D_2)$ where
$(f_1, \ldots, f_n)$ is the basis of ${\mathcal L}(D_1)$,
$(f_1, f_{n+1}, \ldots,  f_{2n-1})$ the basis of ${\mathcal L}(D_2)$ and $(f_{2n}, \ldots, f_{2n+g-1})$ the basis of ${\mathcal M}$, defined in Section \ref{baseLD1}.
          \item Compute the matrices $T$ and $T^{-1}$.
          \item Compute the matrice $\Lambda$. 
        \end{enumerate}
\end{algorithmic}
\end{algorithm}

Note that any element $z$ of the field ${\mathbb F}_{q^n}$ is known by its components relatively to the canonical basis ${\mathcal B}_Q$:
 $z=(z_1,\ldots,z_n)\in \F_{q^n}$ (where $z_i \in {\mathbb F}_{q}$). Then, we have two ways to represent $z$ in our algorithm: 
 embedding $z$ in ${\mathcal L}(D_1+D_2)$ via $I_1\circ E_1^{-1}$ or via $I_2\circ E_2^{-1}$. When we want to multiply two elements $x$ and $y$, 
 we choose conventionally to represent $x$ by $I_1\circ E_1^{-1}(x)$ and $y$ by $I_2\circ E_2^{-1}(y)$.

\begin{algorithm}[H]
\caption{Multiplication algorithm} \label{Algo2}
\begin{algorithmic}
\Require $x=(x_1,\ldots,x_n) \hbox{ and } y=(y_1,\ldots,y_n)$.
\Ensure  $xy.$
       \begin{enumerate}
         \item Compute 
$$\left (\begin{array}{c} z_{1,d_1}\\ \vdots\\ z_{n,d_n}\\z_{n+1,d_{n+1}} \\ \vdots\\ z_{N,d_N}
\end{array} \right)=
\left (\begin{array}{c} z_1\\ \vdots\\ z_n\\z_{n+1} \\ \vdots\\ z_{2n+g-1}
\end{array} \right)= T\left (\begin{array}{c} x_1\\ \vdots\\ x_n\\ 0\\ \vdots\\
0 \end{array} \right)  \hbox{ and } 
\left (\begin{array}{c} t_{1,d_1}\\ \vdots\\ t_{n,d_n}\\t_{n+1,d_{n+1}} \\ \vdots\\ t_{N,d_N}
\end{array} \right)=
\left (\begin{array}{c} t_1\\ \vdots\\t_n\\t_{n+1}\\ \vdots\\ t_{2n+g-1}
\end{array} \right)= T\left (\begin{array}{c} y_1\\0\\ \vdots\\ 0\\ y_2\\ \vdots\\ y_n\\0\\ \vdots\\ 0 
\end{array} \right).
$$ 
where $\sum_{i=1}^{N}d_i=2n+g-1$ and $(z_{i,j}, t_{i,j})\in (\F_{q^{d_j}})^2$ and $(z_i,t_i)\in (\F_q)^2$.
        \item Compute the Hadamard product $u=(u_{1,d_1},\ldots, u_{N,d_N})= (u_1,\ldots, u_{2n+g-1})$, where $u_{i,d_i}=z_{i,d_i}t_{i,d_i}$, in $\F_{q^{d_1}} \times \F_{q^{d_2}} \times \cdots \times \F_{q^{d_N}}$
        as in Theorem \ref{AlgoRandriam}.
        \item Compute $w=(w_1,\ldots,w_{2n+g-1})=T^{-1}(u)$.
        \item Extract $w'=(w_1,\ldots,w_{2n-1})$ (remark that in the previous step
we just have  to compute the $2n-1$ first components of $w$).
        \item Return xy=$\Lambda(w')$.  
       \end{enumerate}
\end{algorithmic}
\end{algorithm}

\section{Complexity analysis}\label{complex}

By Theorem \ref{AlgoRandriam}, the condition (ii) implies $\dim {\mathcal L}(D_i)\geq n$. But  by Riemann-Roch Theorem, $\dim {\mathcal L}(D_i)\geq -g+1+ \deg D_i$ 
which gives in the least case: $\deg D_i \geq n+g-1$ and so, $\deg  (D_1+D_2) \geq 2n+2g-2$. Without loss of generality, we suppose that 
$\deg D_i = n+g-1$ and so $\deg  (D_1+D_2) = 2n+2g-2$. 
%
In this case, $\deg  (D_1+D_2)\ge 2g-1$ and then we obtain $\dim {\mathcal L}(D_1+D_2) = 2n+g-1$.
According to Theorem~\ref{GeneRandriam}, 
Algorithm~\ref{Algo2} requires that the natural evaluation map $T$ is injective.
A sufficient condition to get injectivity is given by Condition~\ref{conditionnbrepoints}:
\begin{equation}\label{conditionnbrepoints}
\sum_{i\mid r}^{r} iN_i> 2n+2g-2
\end{equation} 
where $N_i$ denotes the number of places of degree $i$ in ${\mathcal P}$ and $r$ an integer $>1$.
Indeed, the kernel of $T$ is ${\mathcal L}(D_1+D_2 - \sum_{P\in \mathcal{P}} P)$ and,
under Condition~\ref{conditionnbrepoints}, this kernel is trivial
since the divisor $D_1+D_2 - \sum_{P\in \mathcal{P}} P$ has negative degree.

In terms of number of multiplications in $\F_q$, the complexity of this multiplication algorithm is as follows:
calculation  of $z$ and $t$ needs $2 (2n^2+ng-n)$ multiplications, calculation of $u$ needs $(2n+2g-2+r)\sup_{1\leq i \leq r}\frac{\mu_q(i)}{i}$ bilinear multiplications 
and calculation of $2n-1$ first components of $w$ needs $(2n+g-1)(2n-1)$ multiplications (remark that in Algorithm \ref{Algo2}, 
we just have  to compute the $2n-1$ first components of $w$).  
The calculation of $xy$ needs $n+g$ multiplications. The total 
number of multiplications is bounded by 
$8n^2+n(4g-5)+(2n+2g-2+r)\sup_{1\leq i \leq r}\frac{\mu_q(i)}{i}$.

The asymptotic analysis of our method needs to consider infinite families of algebraic 
function fields defined over $\F_q$ with increasing genus 
(or equivalently of algebraic curves) having the required properties.
The existence of such families follows from that of families of algebraic function fields reaching the Generalized 
Drinfeld-Vladut bound of order $r$ (cf. \cite{Ballet6}). For example,  it is proved in \cite{Ballet2} (with rational places i.e. $r=1$), \cite{Ballet4} and 
in \cite{Ballet7} (with places of degree two i.e. $r=2$), in \cite{BalletPieltant} (with places of degree four i.e. $r=4$)
from a specialization of the type Chudnovsky symmetric algorithms on recursive towers of algebraic function fields of type 
Garcia-Stichtenoth that the bilinear complexity of the multiplication in any degree $n$ extension of $\F_q$ 
is uniformly linear in $q$ with respect to $n$. Good asymptotic bounds are also obtained by using families of modular Shimura curves \cite{Ballet8}. 
Similarly, Randriambololona improved the uniform (resp. asymptotic) bounds with an asymmetric algorithm of type Theorem \ref{AlgoRandriam}. 
Hence, the number of bilinear multiplications of the algorithm \ref{AlgoRandriam} is in $O(n)$ when the places used in the algorithm \ref{AlgoRandriam} 
have a degree one or two. Moreover, the genus $g$ of the required curves also necessarily increases in $O(n)$. 
Consequently, the total number of multiplications/additions/subtractions of the algorithm \ref{AlgoRandriam}  
is in $\BigO{n^2}$ and the total number of bilinear multiplications is in $\BigO{n}$.


 \section{Multiplication in $\F_{16^{n}/\F_{16}}$}\label{effective1}
 
 Set $q=16$ and $n=13, 14, 15$. Note that the multiplication algorithms in the extensions of degree $n<13$ are 
 symmetric because they are obtained with rational and elliptic function fields (with the best possible bilinear complexities of multiplication). 
 Hence, it is only pertinent to consider the multiplication in extensions of degree $\geq 13$ namely with function fields of genus $g\geq 2$. 
 From now on, $F/\F_q$ denotes the algebraic function field
 associated to the hyperelliptic curve $X$ with plane model $y^2+y=x^5$, of genus two. This curve has 33 
 rational points, which is maximal over $\F_q$ according to the Hasse-Weil bound. We represent $\F_{16}$  
 as the field $\F_2(a)=\F_2[X]/(P(X))$ where $P(X)$ is the irreducible polynomial $P(X)= X^4+X+1$ and 
 $a$ denotes a primitive root of $P(X)= X^4+X+1$. Let us give the projective coordinates $(x:y:z)$ of 
 rational points of the curve $X$:
 
 \vspace{.5em}
 
 $$
 \begin{array}{lll}
 P_{\infty}=(0: 1 : 0) & P_{2}=(0: 0: 1) & P_{3}=( 0: 1: 1) \\
 P_{4}=(a : a: 1)& P_{5}=( a: a^4: 1) & P_{6}=(a^2: a^2: 1) \\
 P_{7}=( a^2: a^8: 1) & P_{8}=( a^3: a^5: 1)& P_{9}=( a^3: a^{10}: 1) \\
 P_{10}=(a^4: a: 1) & P_{11}=( a^4: a^4: 1) & P_{12}=( a^5: a^2: 1) \\
 P_{13}=(a^5 : a^8: 1) & P_{14}=(a^6: a^5: 1) & P_{15}=( a^6: a^{10}: 1) \\
 P_{16}=( a^7: a: 1) & P_{17}=( a^7: a^4: 1)& P_{18}=(a^8: a^2: 1) \\
 P_{19}=( a^8: a^8: 1) & P_{20}=( a^9: a^5: 1) & P_{21}=( a^9:a^{10} : 1) \\
 P_{22}=(a^{10}: a: 1) & P_{23}=( a^{10}: a^4: 1) & P_{24}=( a^{11}: a^2: 1) \\
 P_{25}=( a^{11}:a^8 :1 ) & P_{26}=(a^{12}:a^5 : 1) &P_{27}=( a^{12}: a^{10}: 1) \\
 P_{28}=( a^{13}: a: 1) & P_{29}=( a^{13}: a^4: 1) & P_{30}=(a^{14}: a^2: 1) \\
 P_{31}=( a^{14}: a^8: 1) & P_{32}=( 1: a^5: 1) & P_{33}=( 1: a^{10}: 1) \\
 \end{array}
 $$
 
 \subsection{Construction of the required divisors}\label{SS-constructionOfDivisors}
 
 \subsubsection{A place $Q$ of degree n}\label{Q13}
 
 It is sufficient to take a place $Q$ of degree $n$ in the rational function field $\F_{q}(x)/\F_q$, 
 which totally splits in $F/\F_q$. It is equivalent to choose a monic irreducible polynomial 
 $\mathcal{Q}(x)\in \F_{q}[x]$ of degree n such that its roots $\alpha_i$ in $\F_{q^n}$ satisfy $Tr_{\F_{2}}(\alpha_i^5)=0$ 
 for $i=1,...,n$ where the map $Tr_{\F_{2}}$ denotes the classical function Trace over $\F_{2}$ by \cite[Theorem 2.25]{LidlNi}.
 In fact, it is sufficient to verify that this property is satisfied for only one root since a finite field is Galois.

 \vspace{1em}
 
 For example, for the extension $n=13$, we choose the irreducible polynomial 
 
 \begin{equation}
 \begin{aligned}
 \mathcal{Q}(x)=\; & x^{13}+a^{6}x^{12}+a^{5}x^{11}+a^{11}x^{10}+x^{9}+a^{12}x^{8}+\\
 & a^{7}x^{7}+a^{7}x^{5}+a^{2}x^{4}+a^{11}x^{3}+a^{8}x^{2}+a^{6}x+a^{14} .
 \end{aligned}
 \end{equation}
 
 Let $b$ be a root of  $\mathcal{Q}(x)$. It is easy to check that $Tr_{\F_{2}}(b^5)=0$, hence 
 the place $(\mathcal{Q}(x))$ of $\F_{16}(x)/\F_{16}$ 
 is totally splitted in the algebraic function field $F/\F_q$, which means that there exist 
 two places of degree $n$ in $F/\F_q$ lying over the place $(\mathcal{Q}(x))$ of 
 $\F_{16}(x)/\F_{16}$. 
  For the place $Q$ of degree $n$ in the algebraic function field $F/\F_q$, we consider one of the two 
  places in $F/\F_q$ lying over the place 
 $(\mathcal{Q}(x))$ of $\F_{16}(x)/\F_{16}$, namely the orbit of the $\F_{16^{13}}$-rational point 
 $\mathcal{P}_{1i}=(\alpha_i: \beta_i:1)$ where $\alpha_i$ 
 is a root of $\mathcal{Q}(x)$ and 
 $\beta_i= a^6\alpha_i^{12} + a^{13}\alpha_i^{11} + a\alpha_i^{10} + a^{13}\alpha_i^9 + a^8\alpha_i^8 + a\alpha_i^7 + a^8\alpha_i^6 +
   a^9\alpha_i^5 + a^5\alpha_i^4 + a^2\alpha_i^2 + a^8\alpha_i + a^{13}$ for $i=1,...,13$. 
   Notice that the second place is given by the conjugated points 
   $\mathcal{P}_{2i}=(\alpha_i: \beta_i+1:1)$ for $i=1,...,13$.
 
  \vspace{1em}
  
   \subsubsection{The two divisors $D_1$ and $D_2$ of degree n+g-1}
 
 For the divisor $D_1$ of degree $n+g-1$, we choose a place $D_1$ of degree 14 according to the method used for the place $Q$. 
 We consider the orbit of the $\F_{16^{14}}$-rational point $\mathcal{P}^1_{1i}=(\gamma_i: \delta_i:1)$ where $\gamma_i$ 
 is a root of $\mathcal{D}_1(x)=x^{14} + a^9x^{13} + a^6x^{12} + a^7x^{11} + a^{11}x^{10} + a^{12}x^9 + a^{10}x^8 +
   a^6x^7 + a^7x^6 + a^{10}x^5 + a^{14}x^4 + x^3 + x^2 + a^3x + a$
 and $\delta_i=a^4\gamma_i^{12} + a^8\gamma_i^{11} + a^7\gamma_i^9 + a^2\gamma_i^8 + a^3\gamma_i^7 + a^8\gamma_i^6 
+ a^4\gamma_i^5 +a^{14}\gamma_i^4 + \gamma_i^2 + a^6\gamma_i + a^3$ for $i=1,...,14$.
 Notice that the second place is given by the conjugated points $\mathcal{T}^1_{2i}=(\gamma_i: \delta_i+1:1)$ for $i=1,...,14$.
 
  \vspace{1em}
  
  
   For the divisor $D_2$ of degree $n+g-1$, we choose a place $D_2$ of degree 14 according to the method used for the place $Q$. 
 We consider the orbit of the $\F_{16^{14}}$-rational point $\mathcal{P}^2_{1i}=(\gamma_i: \delta_i:1)$ where $\gamma_i$ 
 is a root of $\mathcal{D}_2(x)=x^{14} + x^2 + ax + 1 $
 and  $\delta_i=a^5\gamma_i^{12} + a^{11}\gamma_i^{11} + a^{11}\gamma_i^{10} + a^8\gamma_i^9 + a^4\gamma_i^8 + a^8\gamma_i^7 + \gamma_i^6 + a^8\gamma_i^5 
    + a^2\gamma_i^4 + a^9\gamma_i^3 + a^2\gamma_i^2 + a^2\gamma_i + a^7$
  for $i=1,...,14$.
 Notice that the second place is given by the conjugated points $\mathcal{T}^2_{2i}=(\gamma_i: \delta_i+1:1)$ for $i=1,...,14$.
 
  \vspace{1em}
  
  The place $Q$ and the divisors $D_1$ and  $D_2$ satisfy the good properties since the dimensions of the divisor $D_1-Q$ and  $D_2-Q$ are 
  zero which means that the divisors $D_1-Q$ and  $D_2-Q$ are non-special of degree $g-1$.

\subsection{Construction of required bases}
  
  \subsubsection{The basis of the residue class field $F_Q$}
  
  We choose as basis of the residue class field $F_Q$ the  basis ${\mathcal B}_Q$ associated to 
  the place $Q$ obtained in Section \ref{Q13}.

  \subsubsection{The basis of $\mathcal{L}(D_i)$ for i=1,2}
  
  We choose as basis of the Riemann-Roch space $\mathcal{L}(D_i)$ the basis ${\mathcal B}_{D_i }$
  such that $E_i({\mathcal B}_{D_i})={\mathcal B}_Q$ 
  is a basis of $F_Q$ as in Section \ref{baseLD1}, 
  ${\mathcal B}_{D_1}=(f_1,...,f_n)$ and  ${\mathcal B}_{D_2}=(f_1,f_{n+1}...,f_{2n-1})$. 
  For $j\in \{2, \ldots , n\}$, any element $f_j$ of ${\mathcal B}_{D_1}$  is such that $$f_j(x,y)=\frac{f_{j1}(x)y+f_{j2}(x)}{\mathcal D_1(x)},$$ and for $j\in \{n+1, \ldots , 2n-1\}$, any element $f_j$ of ${\mathcal B}_{D_2}$ is such that $$f_j(x,y)=\frac{f_{j1}(x)y+f_{j2}(x)}{\mathcal D_2 (x)},$$ 
  where $f_{j1}, f_{j2}\in \F_{16}[x]$. To simplify, we set $f_j(x,y)=(f_{j1}(x), f_{j2}(x))$. 
 We have:
  
  \vspace{1em}
  
$f_1(x,y)=1$,\\

$f_2(x,y)=( a^{13} x^{11} + a^{10} x^{10} + a^{3} x^{9} + a^{10} x^{8} + a^{14} x^{7} + a^{11} x^{6} + a^{8} x^{5} + a^{11} x^{4} + x^{3} + a x^{2} +
        a^{11} x + a^{11},\; a^{12} x^{14} + a^{12} x^{13} + a^{9} x^{12} + 
        x^{11} + a^{8} x^{10} + a^{13} x^{9} + a^{12} x^{8} + a x^{7} + a^{5} x^{6} + x^{5} + a^{13} x^{4} + a^{5} x^{3} + a^{12} x^{2} + a^{4} x )$,\\
   
$f_3(x,y)=( a^{2} x^{11} + a^{11} x^{10} + a^{13} x^{9} + a^{3} x^{8} + a^{10} x^{7} + a x^{6} + a^{9} x^{5} + a^{6} x^{4} + a^{5} x^{3} + a^{3} x^{2} + a^{7} x + a^{14}  ,\;
 a^{12} x^{14} + a^{2} x^{13} + a^{11} x^{12} + 
        a^{3} x^{11} + a^{13} x^{10} + a^{7} x^{9} + a^{9} x^{8} + a^{9} x^{7} + a^{13} x^{6} + a x^{5} + a^{5} x^{4} + a^{12} x^{3} + 
        a^{13} x^{2} + a^{8} x + a^{2}  )$,\\
        
$f_4(x,y)=( a^{8} x^{11} + a x^{10} + a^{10} x^{9} + x^{8} + a^{8} x^{7} + a^{14} x^{6} + a^{6} x^{5} + a^{3} x^{4} + a^{14} x^{3} + a^{3} x^{2} + 
        a^{6} x + a   ,\;
        a^{3} x^{14} + a^{7} x^{13} + a^{10} x^{12} + 
        a^{11} x^{11} + a^{13} x^{9} + a^{8} x^{8} + a^{8} x^{7} + a^{3} x^{6} + a^{5} x^{5} + a^{6} x^{4} + a^{3} x^{3} + x^{2} + a^{5} x + 
        a^{6}
    )$,\\  
   
$f_5(x,y)=(  a^{12} x^{11} + a^{12} x^{10} + a^{14} x^{9} + a^{7} x^{8} + a^{7} x^{7} + a^{7} x^{6} + a^{3} x^{5} + a^{13} x^{4} + a^{2} x^{3} + 
        a^{7} x^{2} + a^{7} x + a^{7}  ,\;
        a^{2} x^{14} + a^{11} x^{13} + 
        a^{5} x^{12} + a^{10} x^{11} + a^{10} x^{9} + a^{13} x^{8} + a x^{7} + a^{10} x^{6} + a^{6} x^{5} + a^{12} x^{4} + a^{3} x^{3} + 
        a^{4} x^{2} + a^{10} x + a^{12}
    )$,\\
   
$f_6(x,y)=(   a^{6} x^{11} + a^{14} x^{10} + x^{9} + x^{8} + a^{4} x^{7} + a^{2} x^{6} + a^{7} x^{5} + a^{13} x^{4} + a^{4} x^{3} + a^{12} x^{2} + 
        a^{5} x + a^{7 },\;
        a^{9} x^{14} + a^{6} x^{13} + a^{4} x^{12} + 
        a^{4} x^{11} + a^{12} x^{10} + a^{10} x^{9} + a^{7} x^{8} + a^{2} x^{7} + a^{11} x^{5} + a^{14} x^{4} + a^{4} x^{2} + a^{11} x + a
    )$,\\   
      
$f_7(x,y)=(  a^{8} x^{11} + a^{11} x^{10} + a^{12} x^{9} + a^{2} x^{8} + a^{14} x^{7} + a^{10} x^{6} + a^{4} x^{5} + a^{7} x^{4} + a^{2} x^{3} + 
        a^{13} x + a^{12 } ,\;
        a^{8} x^{14} + a^{10} x^{13} + a^{14} x^{12} + 
        a^{7} x^{11} + a^{5} x^{10} + a^{13} x^{9} + a^{13} x^{8} + a^{12} x^{7} + a^{7} x^{6} + a^{8} x^{5} + a^{12} x^{4} + x^{3} + 
        a^{12} x^{2} + a^{6} x + a^{10}
    )$,\\  
     
$f_8(x,y)=(  a^{13} x^{11} + a x^{10} + a^{11} x^{9} + a x^{8} + a^{9} x^{7} + a^{11} x^{6} + a^{10} x^{5} + a^{9} x^{4} + x^{3} + a^{4} x^{2} + 
        a^{6} x + 1  ,\;
        a^{2} x^{14} + a^{2} x^{13} + a^{11} x^{12} + 
        a^{5} x^{11} + a^{7} x^{10} + a^{2} x^{9} + a^{4} x^{8} + a^{11} x^{7} + a^{14} x^{6} + a^{13} x^{5} + a^{8} x^{4} + a^{4} x^{3} + 
        a^{6} x^{2} + x + a^{12}
    )$, \\ 
     
$f_9(x,y)=(  a^{10} x^{11} + a^{6} x^{10} + a^{12} x^{9} + a^{12} x^{8} + a^{7} x^{7} + a^{3} x^{6} + a^{12} x^{5} + a^{2} x^{4} + a^{6} x^{3} + 
        a^{12} x^{2} + a^{6} x + 1  ,\;
        a^{8} x^{14} + a^{11} x^{13} + 
        a x^{12} + a^{8} x^{11} + a^{5} x^{10} + a^{8} x^{9} + a^{3} x^{8} + a^{14} x^{7} + a^{9} x^{6} + a^{13} x^{5} + a^{11} x^{4} + 
        a^{3} x^{3} + a^{7} x^{2} + x + a^{2}
    )$,\\   
      
$f_{10}(x,y)=(  a^{14} x^{11} + a x^{9} + a^{12} x^{8} + a^{3} x^{7} + x^{6} + a^{7} x^{5} + a^{11} x^{4} + a^{14} x^{3} + a^{8} x^{2} + a x + 
        a^{7}  ,\;
        a^{3} x^{14} + a^{8} x^{13} + a^{7} x^{11} + a^{14} x^{10} + 
        a^{13} x^{9} + a^{9} x^{8} + a^{6} x^{7} + a^{9} x^{6} + a^{8} x^{5} + a^{12} x^{2} + a^{13} x + a^{14}
    )$,\\
      
$f_{11}(x,y)=(  a^{9} x^{11} + a^{5} x^{10} + a^{5} x^{9} + a x^{8} + a^{7} x^{7} + a^{5} x^{6} + a^{2} x^{5} + a^{4} x^{4} + a^{3} x^{3} + a^{2} x^{2} + 
        a x + a  ,\;
        a^{10} x^{14} + a^{4} x^{12} + a^{5} x^{11} + 
        a^{14} x^{10} + a^{5} x^{9} + a^{9} x^{7} + a^{7} x^{6} + a^{4} x^{5} + a^{14} x^{4} + a^{10} x^{3} + a^{9} x^{2} + a^{5} x + 
        a^{3}
             )$,\\
     
$f_{12}(x,y)=(  a^{11} x^{11} + a^{9} x^{10} + a^{10} x^{9} + a^{7} x^{8} + a^{10} x^{6} + a^{2} x^{5} + a^{13} x^{4} + a^{7} x^{3} + a^{2} x^{2} + 
        a^{11} x + a^{3 } ,\;
        a^{9} x^{14} + a^{11} x^{13} + x^{12} + 
        a^{11} x^{11} + x^{10} + a^{9} x^{9} + a^{8} x^{8} + a^{11} x^{7} + a^{8} x^{6} + a^{12} x^{5} + a^{6} x^{4} + a^{2} x^{3} + a^{4} x^{2
       } + a^{14} x + a^{13}
     )$, \\  
      
$f_{13}(x,y)=( a^{9} x^{10} + a x^{9} + a^{3} x^{8} + a^{10} x^{7} + a^{7} x^{6} + a^{12} x^{4} + a^{3} x^{3} + a^{12} x^{2} + a^{11} x + a^{8}   ,\;
a x^{14} + a^{12} x^{13} + a^{2} x^{12} + a^{3} x^{11} + x^{10} + 
        a^{12} x^{9} + a^{3} x^{8} + a^{14} x^{7} + x^{6} + a^{14} x^{5} + a^{10} x^{4} + a^{5} x^{3} + a^{5} x^{2} + a^{6} x + 1
     )$,\\

$f_{14}(x,y)=( a^{14} x^{11} + a^{4} x^{10} + a^{10} x^{9} + a^{6} x^{8} + a^{7} x^{7} + a^{14} x^{6} + a^{12} x^{5} + a x^{4} + a^{5} x^{3} + a^{4} x +
        a^{12 }  ,\;
        a^{14} x^{14} + a^{14} x^{13} + a^{3} x^{12} + a^{11} x^{11} + a^{12} x^{10} + a x^{9
       } + a^{5} x^{8} + a^{2} x^{7} + a^{6} x^{6} + a^{14} x^{5} + a^{6} x^{3} + a^{11} x^{2} + a^{4} x + a^{9}
    )$,\\
      
$f_{15}(x,y)=( a^{8} x^{11} + a^{8} x^{10} + a^{2} x^{9} + a^{13} x^{8} + a^{4} x^{7} + a x^{6} + a^{7} x^{4} + a^{3} x^{3} + a^{9} x^{2} + a^{5} x + 
        a^{5 }  ,\;
        a^{10} x^{14} + a^{6} x^{13} + x^{12} + a^{11} x^{11} + a^{4} x^{10} + a^{10} x^{9} + 
        a^{14} x^{8} + a^{13} x^{7} + a^{11} x^{6} + a x^{5} + a^{12} x^{4} + x^{2} + a^{5}
     )$,\\
     
$f_{16}(x,y)=(  a^{13} x^{11} + a^{8} x^{10} + a^{6} x^{9} + x^{8} + a^{12} x^{7} + a^{9} x^{6} + x^{5} + a^{10} x^{4} + a^{14} x^{3} + a^{4} x^{2} + 
        a^{8} x + a^{2 } ,\;
        a^{7} x^{14} + a^{11} x^{13} + x^{12} + a^{5} x^{11} + a^{10} x^{9} + 
        a^{9} x^{8} + a^{6} x^{7} + a^{9} x^{6} + a^{14} x^{5} + a^{13} x^{4} + a^{11} x^{3} + a^{6} x^{2} + a^{9} x + a^{11}
     )$, \\  
      
$f_{17}(x,y)=( a^{8} x^{11} + a^{10} x^{10} + a^{2} x^{9} + a x^{8} + a^{13} x^{6} + a x^{5} + a^{9} x^{4} + a^{3} x^{2} + a^{4} x + a^{10 }  ,\;
a^{9} x^{14} + a^{13} x^{12} + a^{7} x^{11} + a^{9} x^{10} + a^{12} x^{9} + a^{2} x^{8} + a^{3} x^{7} + 
        a^{13} x^{6} + a^{12} x^{5} + a^{5} x^{4} + a^{4} x^{3} + a^{8} x^{2} + a^{10} x + a^{9}
     )$,\\          
     
     $f_{18}(x,y)=(  x^{10} + a^{5} x^{9} + x^{8} + a^{5} x^{7} + a^{10} x^{6} + a^{10} x^{5} + a^{5} x^{4} + a^{4} x^{3} + a^{5} x^{2} + a^{4 } ,\;
     a^{3} x^{14} + a^{13} x^{13} + a^{5} x^{12} + a^{8} x^{11} + a^{12} x^{10} + a^{4} x^{9} + a^{10} x^{8} + 
        a^{11} x^{7} + a^{2} x^{6} + a^{12} x^{5} + a^{11} x^{4} + a^{2} x^{3} + a^{3} x^{2} + a^{12} x + a^{8}
    )$,\\
      
$f_{19}(x,y)=( a^{8} x^{11} + a^{13} x^{10} + a^{6} x^{9} + a^{6} x^{8} + a^{4} x^{6} + a^{10} x^{5} + a^{4} x^{3} + a^{3} x + 1   ,\;
a^{14} x^{14} + a x^{13} + a^{14} x^{12} + a^{5} x^{11} + a x^{10} + a^{2} x^{9} + a^{7} x^{8} + a x^{7} + 
        a^{8} x^{6} + a^{9} x^{5} + a^{2} x^{4} + a^{9} x^{3} + a^{8} x^{2} + a^{5} x + a^{12}
     )$,\\
     
$f_{20}(x,y)=( a x^{11} + a^{6} x^{10} + x^{9} + a^{5} x^{8} + a^{5} x^{7} + a^{9} x^{6} + a^{12} x^{5} + a^{4} x^{4} + x^{3} + a^{13} x^{2} + a^{4} x +
        1   ,\;
        a x^{14} + x^{13} + a^{6} x^{11} + a^{5} x^{10} + a^{9} x^{9} + a^{10} x^{8} + a^{9} x^{7 
       } + a^{5} x^{6} + a^{8} x^{5} + a^{10} x^{4} + a^{11} x^{2} + a^{6} x + a^{3}
     )$, \\  
      
$f_{21}(x,y)=(a^{10} x^{11} + a^{14} x^{10} + a^{13} x^{8} + a^{2} x^{7} + a^{11} x^{6} + a^{7} x^{5} + a^{7} x^{3} + x^{2} + a^{4} x + a^{7 }   ,\;
a^{6} x^{14} + a^{4} x^{13} + a^{8} x^{12} + a x^{11} + a^{11} x^{10} + a^{4} x^{9} + a^{7} x^{8} + 
        a x^{7} + a^{9} x^{6} + a^{12} x^{5} + a^{6} x^{4} + a^{4} x^{3} + a^{2} x^{2} + a^{5} x + a^{7}
     )$,\\          
     
     $f_{22}(x,y)=( a^{10} x^{11} + x^{10} + a^{7} x^{8} + x^{7} + a^{13} x^{6} + a^{6} x^{5} + a^{2} x^{4} + a^{10} x^{3} + a^{2} x^{2} + a x + 1   ,\;
     a^{2} x^{14} + a^{2} x^{13} + a^{3} x^{11} + a^{11} x^{10} + a^{10} x^{8} + a^{11} x^{7} + a^{11} x^{6 
       } + a^{12} x^{5} + a x^{4} + a^{14} x^{3} + a^{2} x^{2} + a^{3} x
    )$,\\
      
$f_{23}(x,y)=( a^{7} x^{11} + a^{9} x^{10} + a^{7} x^{9} + a^{2} x^{8} + a^{8} x^{7} + a^{11} x^{6} + a^{14} x^{5} + a^{7} x^{4} + a^{10} x^{3} + 
        a^{8} x^{2} + a^{4} x   ,\;
        a^{10} x^{14} + a^{8} x^{13} + a^{5} x^{12} + x^{11} + a^{6} x^{10} + 
        a^{12} x^{9} + a^{11} x^{8} + a^{6} x^{7} + a^{2} x^{6} + a^{8} x^{5} + a^{3} x^{4} + a^{7} x^{3} + a^{12}
     )$,\\
     
$f_{24}(x,y)=( x^{11} + a^{5} x^{10} + a^{3} x^{9} + a^{10} x^{8} + a^{10} x^{7} + a^{4} x^{6} + a^{14} x^{5} + a^{8} x^{3} + a^{3} x^{2} + a^{6} x + 
        a^{8 }  ,\;
        a^{10} x^{13} + a^{13} x^{12} + a^{13} x^{11} + a x^{10} + a^{11} x^{8} + a^{2} x^{7} +
        a x^{6} + a^{2} x^{5} + a^{2} x^{4} + a^{8} x^{2} + a^{6}
     )$, \\  
      
$f_{25}(x,y)=( a^{4} x^{11} + a^{12} x^{10} + a^{4} x^{9} + a^{10} x^{8} + a^{12} x^{7} + a^{14} x^{6} + a^{7} x^{5} + a^{11} x^{4} + a^{13} x^{3} + 
        a^{13} x^{2} + a^{10} x + a^{4 }  ,\;
        x^{14} + a^{6} x^{13} + a^{3} x^{12} + a^{11} x^{11} + 
        a^{13} x^{10} + a^{11} x^{9} + a^{7} x^{8} + a^{2} x^{7} + a^{5} x^{6} + a^{14} x^{5} + a^{5} x^{4} + a^{10} x^{3} + a^{3} x^{2} + 
        a^{14} x + a^{5}
     )$.

 \vspace{1em}
 
 \subsubsection{The basis of $\mathcal{L}(D_1+D_2)$}
 
 As seen in Section \ref{baseLD1D2}, ${\mathcal B}_{D_1+D_2}=(f_1,\ldots,f_n,f_{n+1}, \ldots ,f_{2n+g-1})$ where,
for  $j\in \{1, \ldots , 2n-1\}$, the $f_i$ are defined above.  The basis is completed 
with $$f_{26}(x,y)=  \frac{g_{26}(x)y + h_{26}(x)}{r(x)} \textrm{ and }
			f_{27}(x,y)=  \frac{g_{27}(x)y + h_{27}(x)}{r(x)}$$
     where: \\
     
    $ g_{26}(x) =
     		a^{13} x^{25} + a^{14} x^{24} + a^{2} x^{23} + a^{8} x^{22} + a^{7} x^{21} 
     		+ a x^{19} + a^{9} x^{18} + a^{7} x^{17} + a^{6} x^{16} + a^{10} x^{15} 
     		+ a^{10} x^{14} + a^{2} x^{13} + a^{5} x^{12} + a^{4} x^{11} + a^{11} x^{9} 
     		+ a^{12} x^{8} + a^{7} x^{7} + a^{14} x^{6 }  + x^{5} + a^{8} x^{4} + a^{9} x^{3} + a^{7} x^{2 }$, \\
    
    $h_{26}(x)=
        a^{13} x^{28} + a^{11} x^{26} + x^{25} + a^{4} x^{24} + a^{5} x^{23} + a^{3} x^{22} + a^{3}x^{21} 
        + x^{20} + a^{12} x^{19} + a^{4} x^{18} + a^{13} x^{16} + a^{5} x^{15} + a^{11} x^{14} 
        + a^{6} x^{13} + a x^{12} + x^{11} + a^{8} x^{10} + a^{8} x^{9} + a^{12} x^{7} + a^{2} x^{6} 
        + a^{2} x^{5} + x^{4} + a^{8} x^{3} + a^{14} x^{2} + a^{10} x + a^{10}$,\\
      
	$g_{27}(x)= a^{4} x^{25} + a^{6} x^{24} + a^{6} x^{23} + a^{2} x^{22} + a^{3} x^{21} 
		+ a^{9} x^{20} + a^{2} x^{19} + a^{14} x^{18} + a^{9} x^{17} + a^{8} x^{16} 
		+ a^{13} x^{15} + a^{12} x^{14} + a^{8} x^{13} + x^{12} + a^{3} x^{10} + a^{4} x^{9} 
		+ a^{12} x^{8} + a^{6} x^{7} + a^{2} x^{6} + a^{14} x^{5} + a^{5} x^{4} + a^{4} x^{3} 
		+ a^{4} x^{2} + a^{5} x + a^{5 }$,\\
        
    $h_{27}(x) = a^{9} x^{28} + a^{5} x^{27} + a^{6} x^{26} + a^{4} x^{25} + x^{24 }
       + a^{7} x^{23} + a^{5} x^{22} + a^{6} x^{21} + x^{20} + a^{12} x^{19} + a^{7} x^{18} 
       + a^{11} x^{17} + a^{8} x^{16} + a^{7} x^{15} + a^{6} x^{14} + a^{4} x^{13} 
       + a^{7} x^{12} + a^{3} x^{11} + a^{9} x^{10} + x^{9} + a^{5} x^{8} + a^{9} x^{7} 
       + a^{9} x^{6} + a^{5} x^{5 } + x^{4} + a^{13} x^{2} + a^{3} x + a^{2}$,\\

	$r(x) = x^{28} + a^9x^{27} + a^6x^{26} + a^7x^{25} + a^{11}x^{24} + a^{12}x^{23} 
	+ a^{10}x^{22} + a^6x^{21} + a^7x^{20} + a^{10}x^{19} + a^{14}x^{18} + x^{17} + a^3x^{14} + a^9x^{13} 
	+ a^{10}x^{12} + a^7x^{11} + a^2x^{10} + a^{13}x^9 + a^{10}x^8 + a^{11}x^7 + a^6x^6 
	+ a^{10}x^5 + a^9x^4 + a^7x^3 + a^6x + a$.

\section{Multiplication in $\F_{4^{n}/\F_{4}}$}\label{effective2}

Set $q=4$ and $n=4$. With the algebraic function field $F/\F_q$ defined over $\F_4$
 associated to the hyperelliptic curve $X$ with plane model $y^2+y=x^5$, of genus two, we cannot multiply with places of degree one.
%
Indeed, the number of places of degree one is $N_1(F/\F_4)=5$. So, if we use only places of degree one
we get that $\dim  \Ima (T) \le N_1(F/\F_4)$ whereas $\dim \mathcal{L}(D_1+D_2) =  2n+g-1 = 10$, 
namely the evaluation map $T$ is not injective as required in Theroem~\ref{AlgoRandriam}.

We can ask if there exists another hyperelliptic curve of genus two that allows the multiplication with only 
places of degree one in $\F_{4^4}$. 
By definition, an hyperelliptic curve 
 is a covering of degree two of the projective line $\P_1(\F_q)$.
 It is clear that the maximal number $N_q(g)$ of rational points of a projective smooth absolutely irreducible 
 curve of genus $g=2$ over $\F_q$ is such that $N_q(2)\leq 2q+2$. 
 In the case of $q=4$ and $g=2$, this bound is least that the bound of Serre-Weil $N_q(2)\leq q+1+2m$ where $m=\lfloor 2{\sqrt q}\rfloor$. Hence, there does 
 not exist a maximal Serre-Weil curve (i.e. attaining the Serre-Weil bound) over $\F_4$ of genus two and $N_4(2)\leq 2q+2=q+1+2m-3=10$. In fact, Serre in \cite{Serr1} 
 proves that $N_4(2)=10$ and Shabat in  \cite{Shab} exhibits an optimal (i.e. attaining $N_q(g))$ curve of genus two over $\F_4$, namely the curve of equation $y^2+y=\frac{x}{x^3+x+1}$.
 Clearly, the sufficient Condition (\ref{conditionnbrepoints}) fails since
  $N_1(F/\F_4)>2n+2g-2$  implies that $n<4$.
 Nevertheless,  it is still possible to multiply according to Theorem~\ref{AlgoRandriam} 
 with  the ten places of degree one of the algebraic function field $F/\F_4$ associated to the curve $y^2+y=\frac{x}{x^3+x+1}$,  by choosing suitable degree $n$ place $Q$ and divisors 
 $D_1$ and $D_2$ of $F/\F_4$. For instance, using the description given by Magma, we can choose
 the $n$-degree place
 $$Q = (x^4 + a^2x^2 + a^2x + a, (x^3 + x + 1)y + x^2 + a^2x + a)$$ 
 that lies over the place $(\mathcal{Q}(x))$ of the rational field $\F_4(x)$ defined by
$\mathcal{Q}(x)=  x^4 + a^2x^2 + a^2x + a$, 
and the divisors  
$$D_1 = (x^5 + x^4 + x^3 + x^2 + x + a^2, (x^3 + x + 1)y + a^2x^3 + a^2x^2 + a^2x + a^2)$$ and 
$$D_2 = (x^5 + x + a, (x^3 + x + 1)y + a^2x^4 + a^2x^3 + a^2x + a^2)$$ 
of degree $n+g-1 = 5$ that respectively lie over the places $(\mathcal{D}_1(x))$ and $(\mathcal{D}_2(x))$
of the rational field $\F_4(x)$ defined by
$\mathcal{D}_1(x) = x^5 + x^4 + x^3 + x^2 + x + a^2$ and  $\mathcal{D}_2(x) =  x^5 + x + a$. 
 
Now, suppose that $n=5$.
In this case,  we have $\dim \mathcal{L}(D_1+D_2) =  2n+g-1 = 11$ and still
$\dim  \Ima (T) \le N_1(F/\F_4) = 10$. This means that $T$ can not be injective.
Since the curve of equation $y^2+y=\frac{x}{x^3+x+1}$ is optimal, 
it is impossible to multiply with only places of degree one in extensions of $\F_4$ of degree $n\geq 5$. 
Consequently, the algorithm has to be modified in order to use 
places of degree two. It is clear that we need to use as much as possible places of degree one to minimize bilinear complexity. Therefore, our algorithm will still  use 
the curve of equation  $y^2+y=\frac{x}{x^3+x+1}$.  
%
 This curve has $N_1(F/\F_4)=10$ rational places and $N_2(F/\F_4)=4$ places of degree two. 
 We represent $\F_{4}$ as the field $\F_2(a)=\F_2[X]/(P_1(X))$ where $P_1(X)$ is the primitive irreducible polynomial $P_1(X)= X^2+X+1$ and 
 $a$ denotes a primitive root of $P_1(X)= X^2+X+1$. As we also use evaluations over places of degree two, we need to define the finite field 
 $\F_{16}=\F_{4^2}$ as $\F_4$-vector space. So, we represent $\F_{4^2}$ as the field $\F_4(b)=\F_4[X]/(P_2(X))$ where $P_2(X)$ is the primitive irreducible polynomial $P_2(X)= X^2+X+a$ and 
 $b$ denotes a primitive root of $P_2(X)= X^2+X+a$. 
 

\vspace{.5em}
 
 For the description of the places of degree one, we use the description given by Magma:
 
  $$
 \begin{array}{ll}
 P_{\infty,1}=(\frac{1}{x}, \frac{x^3 + x + 1}{x^3}y + \frac{1}{x}) & 
 P_{\infty,2}=(\frac{1}{x}, \frac{x^3 + x + 1}{x^3}y + \frac{x + 1}{x}) \\
 P_{3}=(x, (x^3 + x + 1) y) & P_{4}=(x, (x^3 + x + 1) y + x + 1)\\
 P_{5}=(x + a, (x^3 + x + 1) y + 1) & P_{6}=(x+ a, (x^3 + x + 1) y + x + 1)\\
 P_{7}=(x + a^2, (x^3 + x + 1) y + 1) & P_{8}=(x + a^2, (x^3 + x + 1) y + x + 1)\\
 P_{9}=(x + 1, (x^3 + x + 1) y + a) & P_{10}=(x + 1, (x^3 + x +1) y + a^2).
\end{array}
$$ 
  
 Note that the first two infinite places lie above the infinite place $1/x$ of the rational function field $\F_q(x)$. 
 Moreover, if $(x:y:z)$ denotes the projective coordinates of rational points of the curve $X$, then these infinite places correspond to 
 the two points to the infinity $P_{\infty,1}=(0: 1 : 0)$ and $P_{\infty,1}=(1: 0 : 0)$ of $X$.
 
 \vspace{.5em}
 
 For the description of the places of degree two, we use the description given by Magma:
 
 
   $$
 \begin{array}{ll}
	Q_1=(x^2 + a x + a, (x^3 + x + 1) y + a^2 x + a^2) & Q_2=(x^2 + a x + a, (x^3 + x +1) y + a^2 x + 1)\\
	Q_3=(x^2 + a^2 x + a^2, (x^3 + x + 1) y + a x + a) & Q_4=(x^2 + a^2 x + a^2, (x^3 + x + 1) y + a x + 1).
\end{array}
$$

%

\vspace{.5em}


Again, we proceed as in Subsection~\ref{SS-constructionOfDivisors}.
We choose an  irreducible polynomial $\mathcal{Q}(x)$ of degree $n = 5$
and  two irreducible polynomials $\mathcal{D}_1(x)$ and $\mathcal{D}_2(x)$ of degree $n+g-1 = 6$.
For instance,
$$\begin{array}{l}
 \mathcal{Q}(x)= x^5 + ax^4 + x^3 + a^2x^2 + ax + 1,\\
  \mathcal{D}_1(x) = x^6 + ax^4 + ax^2 + x + a^2, \\
   \mathcal{D}_2(x) = x^6 + ax^3 + a^2x^2 + a^2.
 \end{array}$$
The  degree $n$ place $(\mathcal{Q}(x))$ and 
the two degree $n+g-1$ places $(\mathcal{D}_1(x))$ and $(\mathcal{D}_2(x))$ of $\F_{q}(x)/\F_q$
totally split in $F/\F_q$.
Then we choose suitable places $Q$, $D_1$ and $D_2$ of $F/\F_q$  lying over the places 
$(\mathcal{Q}(x))$, $(\mathcal{D}_1(x))$ and $(\mathcal{D}_2(x))$ respectively,
that is, such that $D_1-Q$ and $D_2-Q$ are non-special divisors of degree $g-1$. 
For instance, using the description of Magma,
$$\begin{array}{l}
Q = (x^5 + ax^4 + x^3 + a^2x^2 + ax + 1, (x^3 + x + 1)y + ax^4 + a^2x^3 + ax^2 + a^2x + 1),\\
D_1 = (x^6 + ax^4 + ax^2 + x + a^2, (x^3 + x + 1)y + x^5 + a^2),\\
D_2 = (x^6 + ax^3 + a^2x^2 + a^2, (x^3 + x + 1)y + a^2x^5 + ax^2 + a^2).
\end{array}$$

  
As in Section \ref{baseLD1},  
we choose as basis of the Riemann-Roch space $\mathcal{L}(D_i)$ the basis ${\mathcal B}_{D_i }$
  such that $E_i({\mathcal B}_{D_i})={\mathcal B}_Q$ 
  is a basis of $F_Q$. 
We write ${\mathcal B}_{D_1}=(f_1,...,f_n)$ and  ${\mathcal B}_{D_2}=(f_1,f_{n+1}...,f_{2n-1})$. 
  For $j\in \{2, \ldots , n\}$ and $k\in \{n+1, \ldots , 2n-1\}$, any element $f_j$ of ${\mathcal B}_{D_1}$  
  and $f_k$ of ${\mathcal B}_{D_2}$ are respectively of the form:
	$$f_j(x,y)=\frac{f_{j1}(x)y+f_{j2}(x)}{\mathcal D_1(x)} 
	\mathrm{~~and~~ } f_k(x,y)=\frac{f_{k1}(x)y+f_{k2}(x)}{\mathcal D_2 (x)},$$ 
  where $f_{j1}, f_{j2}, f_{k1}, f_{k2}\in \F_{4}[x]$. 
  To simplify, we set $f_j(x,y)=(f_{j1}(x), f_{j2}(x))$ for all $j\in \{2,2n-1\}$. 
 We have:
  
  \vspace{1em}

$f_1(x,y)=1$,\\

$f_2(x,y)=(a^2x^5 + a^2x^4 + ax^3 + ax + 1,  a^2x^6 + a^2x^5 + a^2x^4 + a^2x^3 + x^2 + ax + a)$,\\

$f_3(x,y)=(a^2x^6 + x^5 + x^3 + ax^2 + a^2,  x^6 + x^4 + ax^3 + ax^2 + x + 1),$\\

$f_4(x,y)=(ax^6 + a^2x^5 + ax^4 + x^3 + a^2x^2, ax^6 + ax^5 + a^2x^3 + ax^2 + x + a)$,\\

$f_5(x,y)=(x^6 + a^2x^5 + a^2x^3 + ax^2 + 1,  a^2x^5 + x^4 + a^2x^2 + a^2x + a^2)$,\\

$f_6(x,y)=(a^2x^6 + ax^4 + x^2 + ax + a^2,  ax^6 + a^2x^4 + a^2x^3 + x)$,\\

$f_7(x,y)=(x^6 + x^2 + 1, ax^4 + ax^3 + a^2x + a^2)$,\\

$f_8(x,y)=(a^2x^5 + x^4 + a^2x^3 + ax^2 + x, x^6 + a^2x^5 + a^2x^4 + a^2x^3 + x^2 + a^2x + 1)$,\\

$f_9(x,y)=(x^5 + x^4 + ax^3 + ax + a^2, x^6 + a^2x^5+ ax^4 + x^3 + a^2x^2 + a^2x)$,\\

The basis  ${\mathcal B}_{D_1+D_2}=(f_1,\ldots ,f_{2n+g-1})$ consists of
the $f_i$'s that are defined above for  $j\in \{1, \ldots , 2n-1\}$ and the $g=2$ components 
$$f_{10}(x,y)=  \frac{g_{10}(x)y + h_{10}(x)}{r(x)} \textrm{ and }
			f_{11}(x,y)=  \frac{g_{11}(x)y + h_{11}(x)}{r(x)}$$
     where: \\

$g_{10}(x)=ax^{12} + x^{11} + a^2x^9 + a^2x^8 + ax^7 + ax^6 + a^2x^5 + x^3 + x^2 +
   ax$,\\
   
   $ h_{10}(x)= x^{12} + a^2x^9 + ax^8 + a^2x^7 + x^6 + ax^4 + ax^3 + x^2 + a^2x + 1)$,\\

$f_{11}(x)=a^2x^{12} + a^2x^{11} + a^2x^{10} + x^7 + ax^5 + ax^3 + ax^2 + a^2x + 1$,\\

$h_{11}(x) = x^{10} + x^8 + ax^5 + a^2x^2 + 1$,\\
   
$r(x) = x^{12} + ax^{10} + ax^9 + x^8 + ax^7 + x^6 + a^2x^5 + ax^4 + ax^3 + a^2x^2 + a^2x + a$.

\section{Multiplication in $\F_{2^{n}/\F_{2}}$}\label{effective3}

Set $q=2$ and $g=2$. In this case, it is known that $N_2(g)=6$ by the upper bound of Ihara \cite{Ihara} 
and the lower bound of Serre \cite{Serr1} and it is not sufficient to multiply in the extensions of $\F_{2}$ of degree $n\geq 3$. 
Hence, we need to use places of higher degree. Note that as by the above section, 
$N_4(g)=10=N_1(F/\F_2)+2N_2(F/\F_2)$, we can only multiply in extensions of degree $n\leq 4 $ if we only use places of degree one and two 
(for any curve of genus two!). By consequence,  we set $n=5$ and we consider the curve used in Section \ref{effective2} namely 
the algebraic function field $F/\F_2$ associated to the hyperelliptic curve $X$ with plane model $y^2+y=\frac{x}{x^3+x+1}$, of genus two. 
This curve has $N_1(F/\F_2)=4$ rational places, $N_2(F/\F_2)=3$ places of degree two and $N_4(F/\F_2)=2$ places of degree four. 
 For the description of the places, we use the description given by Magma. The places of degree one are :

  $$
 \begin{array}{ll}
P_{\infty,1} = (\frac{1}{x}, \frac{x^3 + x + 1}{x^3}y + \frac{1}{x}) 
& P_{\infty,2} = (\frac{1}{x}, \frac{x^3 + x + 1}{x^3}y + \frac{x + 1}{x})\\
P_3 =  (x, (x^3 + x + 1)y)
& P_4 = (x, (x^3 + x + 1)y + x + 1).
\end{array}
$$
The places of degree two are:
$$
 \begin{array}{lll}
Q_1 = (x + 1) & Q_2= (x^2 + x + 1, (x^3 + x + 1)y + 1) & Q_3= (x^2 + x + 1, (x^3 + x + 1)y + x  + 1).
\end{array}
$$ 
And the places of degree four are:    
$$
 \begin{array}{ll}
 R_1 = (x^4 + x^3 + 1, (x^3 + x + 1)y + x^2 + x + 1) & R_2 =  (x^4 + x^3 + 1, (x^3 + x + 1)y + x^3 + x^2).
 \end{array}
 $$

According to our method,
we choose the  irreducible polynomial $\mathcal{Q}(x) = x^5 + x^3 + 1$ of degree $n = 5$
and the  two irreducible polynomials $\mathcal{D}_1(x) = x^6 + x^5 + x^4 + x + 1$ 
and $\mathcal{D}_2(x) = x^6 + x^5 + x^2 + x + 1$ of degree $n+g-1 = 6$.
The  degree $n$ place $(\mathcal{Q}(x))$ and 
the two degree $n+g-1$ places $(\mathcal{D}_1(x))$ and $(\mathcal{D}_2(x))$ of $\F_{q}(x)/\F_q$
totally split in $F/\F_q$.
Then we choose  places $Q$, $D_1$ and $D_2$ of $F/\F_q$  lying over these three places 
and such that $D_1-Q$ and $D_2-Q$ are non-special divisors of degree $g-1$. 
Using the description of Magma,
$$\begin{array}{l}
Q = (x^5 + x^3 + 1, (x^3 + x + 1)y + x^4 + x + 1)\\
D_1 = (x^6 + x^5 + x^4 + x + 1, (x^3 + x + 1)y + x^5 + x^3 + 1)\\
D_2 = (x^6 + x^5 + x^2 + x + 1, (x^3 + x + 1)y + x^5 + x^4 + x).
\end{array}$$

  
As in Section \ref{baseLD1},  
we choose as basis of the Riemann-Roch space $\mathcal{L}(D_i)$ the basis ${\mathcal B}_{D_i }$
  such that $E_i({\mathcal B}_{D_i})={\mathcal B}_Q$ 
  is a basis of $F_Q$. 
We write ${\mathcal B}_{D_1}=(f_1,...,f_n)$ and  ${\mathcal B}_{D_2}=(f_1,f_{n+1}...,f_{2n-1})$. 
  For $j\in \{2, \ldots , n\}$ and $k\in \{n+1, \ldots , 2n-1\}$, any element $f_j$ of ${\mathcal B}_{D_1}$  
  and $f_k$ of ${\mathcal B}_{D_2}$ are respectively of the form:
	$$f_j(x,y)=\frac{f_{j1}(x)y+f_{j2}(x)}{\mathcal D_1(x)} 
	\mathrm{~~and~~ } f_k(x,y)=\frac{f_{k1}(x)y+f_{k2}(x)}{\mathcal D_2 (x)},$$ 
  where $f_{j1}, f_{j2}, f_{k1}, f_{k2}\in \F_{4}[x]$. 
  To simplify, we set $f_j(x,y)=(f_{j1}(x), f_{j2}(x))$ for all $j\in \{2,2n-1\}$. 
 We have:
  
  \vspace{1em}

$f_1(x,y)=1$,\\

$f_2(x,y)=(x^3 + x + 1,  x^6 + x^4 + 1)$,\\

$f_3(x,y)=(x^4 + x^3 + x^2 + 1,  x^4 + x^2 + 1),$\\

$f_4(x,y)=(x^5 + x^4 + x^3 + x, x^5 + x^3 + x)$,\\

$f_5(x,y)=(x^6 + x^5 + x^4 + x^2,  x^6 + x^4 + x^2)$,\\

$f_6(x,y)=(x^6 + x^5 + x^4 + x^3 + x^2 + x + 1,  x^5)$,\\

$f_7(x,y)=(x^6 + x^5 + x^3 + 1, x^5 + x^4 + x^2 + 1)$,\\

$f_8(x,y)=(x^6 + x^2 + 1, x^4 + x^3 + x^2 + x + 1)$,\\

$f_9(x,y)=(x^4 + x^3 + x^2 + 1, x^6 + x^3 + x + 1)$,\\

The basis is completed with 

$$f_{10}(x,y)=\frac{(x^{12} + x^8 + x^6 + x^5 + x^2 + x + 1)y
	+ (x^{11} + x^8 + x^5 + x^4 + x^3 + x^2 + x)}{r(x)}$$

$$f_{11}(x,y)=\frac{(x^{12} + x^{11} + x^9 + x^8 + x^6 + x)y + (x^{12} + x^{10} + x^8 + x^7 + x^6 + x^4 + x^3 + x^2)}{r(x)}$$

with $r(x) = x^{12} + x^9 + x^8 + x^7 + x^6 + x^5 + x^4 + x^3 + 1$.
\appendix

\section{Magma implementation of the multiplication algorithms in the finite fields }
\subsection{$\F_{16^{13}}$ over $\F_{16}$}
{\small
\begin{verbatim}
// Asymmetric version of Chudnovsky multiplication algorithm in GF16^13 
// using only places of degree 1

n:=13; g:=2; q:=16;
F16<a>:=GF(16);

// %%%%%%%%%%%%%%%%%%%%%%%%%%%
// ALGEBRAIC FUNCTION FIELD WITH CURVE y^2 + y + x^5 OF GENUS 2
// %%%%%%%%%%%%%%%%%%%%%%%%%%%

Kx<x> := FunctionField(F16); 
Kxy<y> := PolynomialRing(Kx);
f:=y^2 + y + x^5;
F<c> := FunctionField(f);

//------ find places of higher degree
LP1:=Places(F,1);    // 33 degree 1 places     

// %%%%%%%%%%%%%%%%%%%%%%%%%%%
// CHOOSE GOOD PLACE Q AND GOOD DIVISORS D1, D2
// %%%%%%%%%%%%%%%%%%%%%%%%%%%

//------ q<x>:=RandomIrreduciblePolynomial(F16,n);
q := x^13 + a^6 *x^12 + a^5*x^11 + a^11*x^10 + x^9 + a^12 *x^8 
 + a^7*x^7 + a^7*x^5 + a^2*x^4 + a^11*x^3 + a^8*x^2 + a^6*x+a^14;
Q := Decomposition(F,Zeros(Kx!q)[1])[1]; 
K<b>:=ResidueClassField(Q);
"degree of Q is ", Degree(Q); // n

//------ D1 := RandomIrreduciblePolynomial(F16,n+g-1);
D1 := x^14 + a^9*x^13 + a^6*x^12 + a^7*x^11 + a^11*x^10 + a^12*x^9 + a^10*x^8 + a^6*x^7 + a^7*x^6 
+ a^10*x^5 + a^14*x^4 + x^3 + x^2 + a^3*x + a;
D1:=Decomposition(F,Zeros(Kx!D1)[1])[1]; 
D1:=1*D1;

//------ D2:=RandomIrreduciblePolynomial(F16,n+g-1);
D2 := x^14 + x^2 + a*x + 1;
D2:=Decomposition(F,Zeros(Kx!D2)[1])[1]; 
D2:=1*D2;

//------ Check D1 and D2 are suitable
"D1-Q is special ? ",IsSpecial(D1-Q); // false
"dim L(D1) is ", Dimension(D1); // n
"D2-Q is special ? ", IsSpecial(D2-Q);  // false
"dim L(D2) is ", Dimension(D2); // n
"Is D1 equivalent to D2 ? ", D1 eq D2; // false
"dim L(D1+D2) is ", Dimension(D1+D2); // 27

// %%%%%%%%%%%%%%%%%%%%%%%%%%%
// CONSTRUCTION OF THE RIEMANN-ROCH SPACES
// %%%%%%%%%%%%%%%%%%%%%%%%%%%

LD1, h1 :=RiemannRochSpace(D1);
BD1 := h1(Basis(LD1)); 
LD2, h2 :=RiemannRochSpace(D2);
BD2 := h2(Basis(LD2));
LD1D2, h := RiemannRochSpace(D1+D2);

// %%%%%%%%%%%%%%%%%%%%%%%%%%%
// SET GOOD BASES
// %%%%%%%%%%%%%%%%%%%%%%%%%%%

//------ Construction of E1=Evalf(Q) and set a good basis for L(D1)
L:=[]; for i in [1..n] do L:=Append(L,ElementToSequence(Evaluate(BD1[i],Q))); end for;
E1:=Transpose(Matrix(L));
BasisLD1 := Matrix(F,1,n,BD1)*Matrix(F,E1^-1);
//BasisLD1;

//------ Construction of E2=Evalf(Q) and set a good basis for L(D2)
L:=[]; for i in [1..n] do L:=Append(L,ElementToSequence(Evaluate(BD2[i],Q))); end for;
E2:=Transpose(Matrix(L));
BasisLD2 := Matrix(F,1,n,BD2)*Matrix(F,E2^-1);
//BasisLD2;

//------ Merge the two previous bases to a basis of L(D1+D2) 
L1 := ElementToSequence(BasisLD1); 
L2 := ElementToSequence(BasisLD2);
// Concatenate L1 to L2 except the first component of L2
LL := [LD1D2!L1[i] : i in [1..n]] cat [LD1D2!L2[i] : i in [2..n]];
BasisLD1D2 := h(ExtendBasis(LL,LD1D2));

//------ In addition, we require that the two last elements of the basis are evaluated to 0
X := Transpose(Matrix(F,[ElementToSequence(Evaluate(BasisLD1D2[i],Q)): i in [2*n+g-2..2*n+g-1]]));
TT := Matrix(F,1,n,L1)*X;
BasisLD1D2[2*n+g-2] := BasisLD1D2[2*n+g-2] - ElementToSequence(TT)[1];
BasisLD1D2[2*n+g-1] := BasisLD1D2[2*n+g-1] - ElementToSequence(TT)[2];
BLD1D2 := ExtendBasis([LD1D2!BasisLD1D2[i] : i in [1..n]], LD1D2);
"Basis of BLD1D2 : "; 
for i in [1..2*n+g-1] do  printf "f%o=",i; BasisLD1D2[i]; end for;
//for i in [1..2*n+g-1] do  Evaluate(BasisLD1D2[i],Q); end for;

// %%%%%%%%%%%%%%%%%%%%%%%%%%%
// CONSTRUCTION OF T AND T^-1 USING PLACES OF DEGREE ONE
// %%%%%%%%%%%%%%%%%%%%%%%%%%%

// the rows of T are the evaluation on the degree 1 places over F16
ST:=[]; 
for j:=1 to 2*n+g-1 do 
    for i:=1 to 2*n+g-1 do 
        ST:=Append(ST, Evaluate(BasisLD1D2[i], LP1[j]));
    end for; 
end for; 

T := Matrix(2*n+g-1,2*n+g-1, ST);
"Rank of T : ", Rank(T);
TI := T^-1;

// %%%%%%%%%%%%%%%%%%%%%%%%%%%
// ALGORITHM FOR THE MULTIPLICATION
// %%%%%%%%%%%%%%%%%%%%%%%%%%%

// ============= FUNCTION MULT =============
// @parameter : VarX, VarY are the coordinates  in a canonical basis
//			of the elements of (F16)^n to multipliate
// @return : the result of VarX * VarY in the canonical basis of (F16)^n

mult := function(varX, varY) 

	//------ injection of the coordinates of X into L(D1+D2) using basis BLD1D2
	fx := VerticalJoin(varX,ZeroMatrix(F16,n+g-1,1));

	//------ injection of the coordinates of Y into L(D1+D2) using basis BLD1D2
	Y1 := [varY[1,1]] cat [0 : i in [2..n]] cat [varY[i,1] : i in [2..n]] cat [0,0];
	fy := Matrix(F16,2*n+g-1,1,Y1);

	//------ Hadamard product  u = T(fx)*T(fy)
	u:=ZeroMatrix(F16,2*n+g-1,1);
	// the products are done in F16 for all coordinates
	TFX:=T*fx; 
	TFY:=T*fy;
	for i:=1 to 2*n+g-1 do u[i,1]:=TFX[i][1]*TFY[i][1]; end for;

	//------ E_Q(TI(u)) : T^-1 then evaluation in Q
	uu:=Matrix(F,TI*u);
	result := Evaluate(Matrix(1,2*n+g-1,BasisLD1D2)*uu,Q);
	return result;
end function; 
// =============  END FUNCTION MULT =============

// %%%%%%%%%%%%%%%%%%%%%%%%%%%
// EXAMPLES
//%%%%%%%%%%%%%%%%%%%%%%%%%%%

// Example 1 : (a+b)*(1+a*b+a*b^2) = a*b^3 + a^5*b^2 + a^8*b + a
X1 := Matrix(F16,n,1,ElementToSequence(a+b)); 
Y1 := Matrix(F16,n,1,ElementToSequence(1+a*b+a*b^2));
mult(X1,Y1);
 
// Example 2 : b^5 * (1 + a^2*b^3 + b^4) = b^9 + a^2*b^8 + b^5
X2 := Matrix(F16,n,1,ElementToSequence(b^5)); 
Y2 := Matrix(F16,n,1,ElementToSequence(1 + a^2*b^3 + b^4)); 
mult(X2,Y2);

// Example 3 : (a*b + b^2 + a*b^4) * (a*b + b^2 + a*b^4) = a^2*b^8 + b^4 + a^2*b^2
X3 := Matrix(F16,n,1,ElementToSequence(a*b + b^2 + a*b^4)); 
Y3 := Matrix(F16,n,1,ElementToSequence(a*b + b^2 + a*b^4)); 
mult(X3,Y3);

// Example 4 : (a*b + b^2 + a*b^4 + b^7 + a*b^12) * (a*b + b^2 + a*b^4) = 
// a*b^12 + a^8*b^11 + b^10 + a^2*b^9 + a^8*b^8 + a^3*b^7 + a^9*b^6 + a^11*b^5 +
//  a^8*b^4 + a^5*b^3 + a^3*b^2 + a^8*b + a
X4 := Matrix(F16,n,1,ElementToSequence(a*b + b^2 + a*b^4 + b^7 + a*b^12)); 
Y4 := Matrix(F16,n,1,ElementToSequence(a*b + b^2 + a*b^4)); 
mult(X4,Y4);
\end{verbatim}
}

 \subsection{$\F_{4^{5}}$ over $\F_{4}$}
{\small
\begin{verbatim}
// Asymmetric version of Chudnovsky multiplication algorithm in GF4^5 % CHOISIR UNE NOTATION ENTRE GF (Galois Field) ou F pour \UTF{00EA}tre homog\`ene.
// In this case, the curve is defined over F4 instead of F16 
// and, for n:=5, we must use places of degree 2.
// Note that in GF4^4 (i.e. when n:= 4) it suffices to use degree 1 places. % IDEM

n:=5; g:=2; q:=4;
F4<a>:=GF(4);

// %%%%%%%%%%%%%%%%%%%%%%%%%%%
// ALGEBRAIC FUNCTION FIELD WITH CURVE y^2 + y + x/(x^3 + x + 1) OF GENUS 2
// %%%%%%%%%%%%%%%%%%%%%%%%%%%

Kx<x> := FunctionField(F4); 
Kxy<y> := PolynomialRing(Kx);
f:=y^2 + y + x/(x^3 + x + 1);
F<c> := FunctionField(f);

//------ find places of higher degree
LP1:=Places(F,1);    // 10 degree 1 places     
LP2:=Places(F,2);   // 4 degree 2 places 

// %%%%%%%%%%%%%%%%%%%%%%%%%%%
// CHOOSE GOOD PLACE Q AND GOOD DIVISORS D1, D2
// %%%%%%%%%%%%%%%%%%%%%%%%%%%

//------ q<x>:=RandomIrreduciblePolynomial(F4,n);
q := x^5 + a*x^4 + x^3 + a^2*x^2 + a*x + 1;
Q := Decomposition(F,Zeros(Kx!q)[1])[1]; 
K<b>:=ResidueClassField(Q);
"degree of Q is ", Degree(Q); // n

//------ D1 := RandomIrreduciblePolynomial(F4,n+g-1);
D1 := x^6 + a*x^4 + a*x^2 + x + a^2;
D1:=Decomposition(F,Zeros(Kx!D1)[1])[1]; 
D1:=1*D1;

//------ D2:=RandomIrreduciblePolynomial(F4,n+g-1);
D2 := x^6 + a*x^3 + a^2*x^2 + a^2;
D2:=Decomposition(F,Zeros(Kx!D2)[1])[1]; 
D2:=1*D2;

//------ Check D1 and D2 are suitable
"D1-Q is special ? ",IsSpecial(D1-Q); // false
"dim L(D1) is ", Dimension(D1); // n
"D2-Q is special ? ", IsSpecial(D2-Q);  // false
"dim L(D2) is ", Dimension(D2); // n
"Is D1 equivalent to D2 ? ", D1 eq D2; // false
"dim L(D1+D2) is ", Dimension(D1+D2); // 11

// %%%%%%%%%%%%%%%%%%%%%%%%%%%
// CONSTRUCTION OF THE RIEMANN-ROCH SPACES
// %%%%%%%%%%%%%%%%%%%%%%%%%%%

LD1, h1 :=RiemannRochSpace(D1);
BD1 := h1(Basis(LD1)); 
LD2, h2 :=RiemannRochSpace(D2);
BD2 := h2(Basis(LD2));
LD1D2, h := RiemannRochSpace(D1+D2);

// %%%%%%%%%%%%%%%%%%%%%%%%%%%
// SET GOOD BASES
// %%%%%%%%%%%%%%%%%%%%%%%%%%%

//------ Construction of E1=Evalf(Q) and set a good basis for L(D1)
L:=[]; for i in [1..n] do L:=Append(L,ElementToSequence(Evaluate(BD1[i],Q))); end for;
E1:=Transpose(Matrix(L));
BasisLD1 := Matrix(F,1,n,BD1)*Matrix(F,E1^-1);
//BasisLD1;

//------ Construction of E2=Evalf(Q) and set a good basis for L(D2)
L:=[]; for i in [1..n] do L:=Append(L,ElementToSequence(Evaluate(BD2[i],Q))); end for;
E2:=Transpose(Matrix(L));
BasisLD2 := Matrix(F,1,n,BD2)*Matrix(F,E2^-1);
//BasisLD2;

//------ Merge the two previous bases to a basis of L(D1+D2) 
L1 := ElementToSequence(BasisLD1); 
L2 := ElementToSequence(BasisLD2);
// Concatenate L1 to L2 except the first component of L2
LL := [LD1D2!L1[i] : i in [1..n]] cat [LD1D2!L2[i] : i in [2..n]];
BasisLD1D2 := h(ExtendBasis(LL,LD1D2));

//------ In addition, we require that the two last elements of the basis are evaluated to 0
X := Transpose(Matrix(F,[ElementToSequence(Evaluate(BasisLD1D2[i],Q)): i in [2*n+g-2..2*n+g-1]]));
TT := Matrix(F,1,n,L1)*X;
BasisLD1D2[2*n+g-2] := BasisLD1D2[2*n+g-2] - ElementToSequence(TT)[1];
BasisLD1D2[2*n+g-1] := BasisLD1D2[2*n+g-1] - ElementToSequence(TT)[2];
BLD1D2 := ExtendBasis([LD1D2!BasisLD1D2[i] : i in [1..n]], LD1D2);
"Vectors of BLD1D2 are independent ? ", IsIndependent(BLD1D2);
"Basis of BLD1D2 : "; 
for i in [1..2*n+g-1] do  printf "f%o=",i; BasisLD1D2[i]; end for;
//for i in [1..2*n+g-1] do  Evaluate(BasisLD1D2[i],Q); end for;

// %%%%%%%%%%%%%%%%%%%%%%%%%%%
// CONSTRUCTION OF T AND T^-1 USING PLACES OF DEGREE 1 AND 2
// %%%%%%%%%%%%%%%%%%%%%%%%%%%

// The 9 first rows of T are the evaluation on 9 places of degree 1. 
// The two last rows are the evaluation on one place of degree 2,
// since it has 2 coordinates over F4

ST:=[]; 
for j:=1 to 9 do 
    for i:=1 to 2*n+g-1 do 
        ST:=Append(ST, Evaluate(BasisLD1D2[i], LP1[j]));
    end for; 
end for; 
STemp:= ST;

// Choose a degree 2 place such that the 11x11-matrix T has rank 11
numPlace:=0;
for j:=1 to #LP2 do
    ST := STemp;
    ST1:=[];
    for i:=1 to 2*n+g-1 do 
        eva:= ElementToSequence(Evaluate(BasisLD1D2[i], LP2[j]),F4);  
        ST1:=Append(ST1, eva[1]);
        ST1:=Append(ST1, eva[2]);
    end for; 
    for i:=1 to 4*n+2*g-2 by 2 do ST:=Append(ST, ST1[i]); end for;
    for i:=2 to 4*n+2*g-2 by 2 do ST:=Append(ST, ST1[i]); end for;
    T := Matrix(2*n+g-1,2*n+g-1, ST);	
    if Rank(T) eq 11 then numPlace:=j; break; end if;
end for;
"num of the chosen 2 degree place : ", numPlace;
"Rank of T : ", Rank(T);
TI := T^-1;

// %%%%%%%%%%%%%%%%%%%%%%%%%%%
// ALGORITHM FOR THE MULTIPLICATION
// %%%%%%%%%%%%%%%%%%%%%%%%%%%

// ============= FUNCTION MULT =============
// @parameter : VarX, VarY are the coordinates  in a canonical basis
//			of the elements of (F4)^n to multipliate
// @return : the result of VarX * VarY in the canonical basis of (F4)^n

mult := function(varX, varY) 

	//------ injection of the coordinates of X into L(D1+D2) using basis BLD1D2
	fx := VerticalJoin(varX,ZeroMatrix(F4,n+g-1,1));

	//------ injection of the coordinates of Y into L(D1+D2) using basis BLD1D2
	Y1 := [varY[1,1]] cat [0 : i in [2..n]] cat [varY[i,1] : i in [2..n]] cat [0,0];
	fy := Matrix(F4,2*n+g-1,1,Y1);

	//------ Hadamard product  u = T(fx)*T(fy)
	u:=ZeroMatrix(F4,2*n+g-1,1);
	TFX:=T*fx; 
	TFY:=T*fy;

	// the products are done in F4 for the 9 first coordinates
	for i:=1 to 9 do u[i,1]:=TFX[i][1]*TFY[i][1]; end for;
	
	// and over F16 for the other coordinates taken 2 by 2
	KK<hh>:=Parent(Evaluate(BasisLD1D2[2], LP2[numPlace])); 
	mb:=KK![TFX[10][1], TFX[11][1]]* KK![TFY[10][1], TFY[11][1]];  
	mp:=ElementToSequence(mb,F4);
	u[10,1]:=mp[1];
	u[11,1]:=mp[2];
	
	//------ E_Q(TI(u)) : T^-1 then evaluation in Q
	uu:=Matrix(F,TI*u);
	result := Evaluate(Matrix(1,2*n+g-1,BasisLD1D2)*uu,Q);
	return result;
end function; 
// =============  END FUNCTION MULT =============

// %%%%%%%%%%%%%%%%%%%%%%%%%%%
// EXAMPLES
// %%%%%%%%%%%%%%%%%%%%%%%%%%%

// Example 1 : (a+b)*(1+a*b+a*b^2) = a*b^3 + b^2 + a*b + a
X1 := Matrix(F4,n,1,ElementToSequence(a+b)); 
Y1 := Matrix(F4,n,1,ElementToSequence(1+a*b+a*b^2));
mult(X1,Y1);
 
// Example 2 : b^5 * (1 + a^2*b^3 + b^4) = b^4 + a^2*b^2 + a
X2 := Matrix(F4,n,1,ElementToSequence(b^5)); 
Y2 := Matrix(F4,n,1,ElementToSequence(1 + a^2*b^3 + b^4)); 
mult(X2,Y2);

// Example 3 : (a*b + b^2 + a*b^4) * (a*b + b^2 + a*b^4) = a^2*b^3 + a^2*b^2 + a^2*b + 1
X3 := Matrix(F4,n,1,ElementToSequence(a*b + b^2 + a*b^4)); 
Y3 := Matrix(F4,n,1,ElementToSequence(a*b + b^2 + a*b^4)); 
mult(X3,Y3);

\end{verbatim}
}

\subsection{$\F_{2^{5}}$ over $\F_{2}$}
{\small
\begin{verbatim}
// Same curve but defined over GF2. 
// We use 3 degree 1 places, 2 degree 2 places and 1 degree 4 place

n:=5; g:=2; q:=2;
F2:=GF(2);

// %%%%%%%%%%%%%%%%%%%%%%%%%%%
// ALGEBRAIC FUNCTION FIELD WITH CURVE y^2 + y + x/(x^3 + x + 1) OF GENUS 2
// %%%%%%%%%%%%%%%%%%%%%%%%%%%

Kx<x> := FunctionField(F2); 
Kxy<y> := PolynomialRing(Kx);
f:=y^2 + y + x/(x^3+x+1);
F<c> := FunctionField(f);

//------ find places of higher degree
LP1:=Places(F,1);    // 3 degree 1 places     
LP2:=Places(F,2);   // 1 degree 2 places 
LP4:=Places(F,4);   // 7 degree 4 places
#LP1;#LP2;#LP4;

// %%%%%%%%%%%%%%%%%%%%%%%%%%%
// CHOOSE GOOD PLACE Q AND GOOD DIVISORS D1, D2
// %%%%%%%%%%%%%%%%%%%%%%%%%%%

//------ q<x>:= RandomIrreduciblePolynomial(F2,n);
q := x^5 + x^3 + 1;
Q := Decomposition(F,Zeros(Kx!q)[1])[1]; 
K<b>:=ResidueClassField(Q);

//------ D1 := RandomIrreduciblePolynomial(F2,n+g-1); D1;
D1 := x^6 + x^5 + x^4 + x + 1;
D1:=Decomposition(F,Zeros(Kx!D1)[1])[1]; 
D1:=1*D1;

//------ D2:=RandomIrreduciblePolynomial(F2,n+g-1);
D2 := x^6 + x^5 + x^2 + x + 1;
D2:=Decomposition(F,Zeros(Kx!D2)[1])[1]; 
D2:=1*D2;

//------ Check D1 and D2 are suitable
"D1-Q is special ? ",IsSpecial(D1-Q); // false
"dim L(D1) is ", Dimension(D1); // n
"D2-Q is special ? ", IsSpecial(D2-Q);  // false
"dim L(D2) is ", Dimension(D2); // n
"Is D1 equivalent to D2 ? ", D1 eq D2; // false
"dim L(D1+D2) is ", Dimension(D1+D2); // 11

// %%%%%%%%%%%%%%%%%%%%%%%%%%%
// CONSTRUCTION OF THE RIEMANN-ROCH SPACES
// %%%%%%%%%%%%%%%%%%%%%%%%%%%

LD1, h1 :=RiemannRochSpace(D1);
BD1 := h1(Basis(LD1)); 
LD2, h2 :=RiemannRochSpace(D2);
BD2 := h2(Basis(LD2));
LD1D2, h := RiemannRochSpace(D1+D2);

// %%%%%%%%%%%%%%%%%%%%%%%%%%%
// SET GOOD BASES
// %%%%%%%%%%%%%%%%%%%%%%%%%%%

//------ Construction of E1 : E1=Evalf(Q) and set a good basis for L(D1)
L:=[]; for i in [1..n] do L:=Append(L,ElementToSequence(Evaluate(BD1[i],Q))); end for;
E1:=Transpose(Matrix(L));
BasisLD1 := Matrix(F,1,n,BD1)*Matrix(F,E1^-1);
//BasisLD1;

//------ Construction of E2 : E2=Evalf(Q) and set a good basis for L(D2)
L:=[]; for i in [1..n] do L:=Append(L,ElementToSequence(Evaluate(BD2[i],Q))); end for;
E2:=Transpose(Matrix(L));
BasisLD2 := Matrix(F,1,n,BD2)*Matrix(F,E2^-1);
//BasisLD2;

//------ Merge the two previous bases to a basis of L(D1+D2) 
L1 := ElementToSequence(BasisLD1); 
L2 := ElementToSequence(BasisLD2);
// Concatenate L1 to L2 except the first component of L2
LL := [LD1D2!L1[i] : i in [1..n]] cat [LD1D2!L2[i] : i in [2..n]];
BasisLD1D2 := h(ExtendBasis(LL,LD1D2));

//------ In addition, we require that the two last elements of the basis are evaluated to 0
X := Transpose(Matrix(F,[ElementToSequence(Evaluate(BasisLD1D2[i],Q)): i in [2*n+g-2..2*n+g-1]]));
TT := Matrix(F,1,n,L1)*X;
BasisLD1D2[2*n+g-2] := BasisLD1D2[2*n+g-2] - ElementToSequence(TT)[1];
BasisLD1D2[2*n+g-1] := BasisLD1D2[2*n+g-1] - ElementToSequence(TT)[2];
BLD1D2 := ExtendBasis([LD1D2!BasisLD1D2[i] : i in [1..n]], LD1D2);
"Vectors of BLD1D2 are independent ? ", IsIndependent(BLD1D2);
"Basis of BLD1D2 : "; 
for i in [1..2*n+g-1] do  printf "f%o=",i; BasisLD1D2[i]; end for;
//for i in [1..2*n+g-1] do  Evaluate(BasisLD1D2[i],Q); end for;

// %%%%%%%%%%%%%%%%%%%%%%%%%%%
// CONSTRUCTION OF T AND T^-1 USING PLACES OF DEGREE 1, 2 and 4
// %%%%%%%%%%%%%%%%%%%%%%%%%%%

// The 3 first rows of T are the evaluation on the degree 1 places.
// The 4 next rows are the evaluation on 2 places of degree;
// each evaluation is on 2 rows since it has 2 coordinates over F2
// Finally, the 4 last rows are the evaluation on 1 place of degree 4;
// this evaluation is on 4 rows since it has 4 coordinates over F2

ST:=[]; 
for j:=1 to 3 do 
    for i:=1 to 2*n+g-1 do 
        ST:=Append(ST, Evaluate(BasisLD1D2[i], LP1[j]));
    end for; 
end for; 

ST2:=[]; 
for j:=1 to 2 do  
    ST2:=[]; 
    for i:=1 to 2*n+g-1 do 
        eva:= ElementToSequence(Evaluate(BasisLD1D2[i], LP2[j]),F2);  
        ST2:=Append(ST2, eva[1]);
        ST2:=Append(ST2, eva[2]);
    end for; 
    for i:=1 to 4*n+2*g-2 by 2 do ST:=Append(ST, ST2[i]); end for;
    for i:=2 to 4*n+2*g-2 by 2 do ST:=Append(ST, ST2[i]); end for;
end for; 

ST4:=[];
for i:=1 to 2*n+g-1 do 
    eva:= ElementToSequence(Evaluate(BasisLD1D2[i], LP4[2]),F2);  
    ST4:=Append(ST4, eva[1]);
    ST4:=Append(ST4, eva[2]);
    ST4:=Append(ST4, eva[3]);
    ST4:=Append(ST4, eva[4]);
end for;
 
for i:=1 to 8*n+4*g-4 by 4 do ST:=Append(ST, ST4[i]); end for;
for i:=2 to 8*n+4*g-4 by 4 do ST:=Append(ST, ST4[i]); end for;
for i:=3 to 8*n+4*g-4 by 4 do ST:=Append(ST, ST4[i]); end for; 
for i:=4 to 8*n+4*g-4 by 4 do ST:=Append(ST, ST4[i]); end for;

T := Matrix(2*n+g-1,2*n+g-1, ST);
"Rank of T : ", Rank(T);
TI := T^-1;

// %%%%%%%%%%%%%%%%%%%%%%%%%%%
// ALGORITHM FOR THE MULTIPLICATION
// %%%%%%%%%%%%%%%%%%%%%%%%%%%

// ============= FUNCTION MULT =============
// @parameter : VarX, VarY are the coordinates  in a canonic basis
//			of the elements of (F4)^n to multiply
// @return : the result of VarX * VarY in the canonic basis of (F2)^n

mult := function(varX, varY) 

	//------ injection of the coordinates of X into L(D1+D2) using basis BLD1D2
	fx := VerticalJoin(varX,ZeroMatrix(F2,n+g-1,1));

	//------ injection of the coordinates of Y into L(D1+D2) using basis BLD1D2
	Y1 := [varY[1,1]] cat [0 : i in [2..n]] cat [varY[i,1] : i in [2..n]] cat [0,0];
	fy := Matrix(F2,2*n+g-1,1,Y1);

	//------ Hadamard product  u = T(fx)*T(fy)
	u:=ZeroMatrix(F2,2*n+g-1,1);
	TFX:=T*fx; 
	TFY:=T*fy;
	
	// the products are done in F2 for the 3 first coordinates
	for i:=1 to 3 do u[i,1]:=TFX[i][1]*TFY[i][1]; end for; // degree 1 places

	// over F4 for the next 4 coordinates taken 2 by 2
	for i:=4 to 7 by 2 do  // degree 2 places
		KK<hh>:=Parent(Evaluate(BasisLD1D2[2], LP2[(i-2) div 2])); 
		mb:=KK![TFX[i][1], TFX[i+1][1]]* KK![TFY[i][1], TFY[i+1][1]];  
		mp:=ElementToSequence(mb,F2);
		u[i,1]:=mp[1];
		u[i+1,1]:=mp[2];
	end for;

	// and over F16 for the other coordinates taken 4 by 4
	KK4<hh4>:=Parent(Evaluate(BasisLD1D2[2], LP4[ 2] ));  // degree 4 place
	mb:=KK4![TFX[8][1], TFX[9][1], TFX[10][1], TFX[11][1]]
	    * KK4![TFY[8][1], TFY[9][1], TFY[10][1], TFY[11][1]];  
	mp:=ElementToSequence(mb,F2); 
	u[8,1]:= mp[1];
	u[9,1]:= mp[2];
	u[10,1]:= mp[3];
	u[11,1]:= mp[4];

	//------ E_Q(TI(u)) : T^-1 then evaluation in Q
	uu:=Matrix(F,TI*u);
	result := Evaluate(Matrix(1,2*n+g-1,BasisLD1D2)*uu,Q);
	return result;
end function; 
// =========  END FUNCTION MULT =========

// %%%%%%%%%%%%%%%%%%%%%%%%%%%
// EXAMPLES
//%%%%%%%%%%%%%%%%%%%%%%%%%%%

// Example 1 : (1+b)*(1+b+b^2) = b^5
X1 := Matrix(F2,n,1,ElementToSequence(1+b)); 
Y1 := Matrix(F2,n,1,ElementToSequence(1+b+b^2));
mult(X1,Y1);
 
// Example 2 : b^4*(b^2+b^3) = b^20
X2 := Matrix(F2,n,1,ElementToSequence(b^4)); 
Y2 := Matrix(F2,n,1,ElementToSequence(b^2+b^3));
mult(X2,Y2);

// Example 3 : (1+b+b^3)*(1+b+b^3) = b^21
X3 := Matrix(F2,n,1,ElementToSequence(1+b+b^3)); 
Y3 := Matrix(F2,n,1,ElementToSequence(1+b+b^3));
mult(X3,Y3);

\end{verbatim}
}
\noindent

\end{document}